\newcommand{\mb}{\mathbf}
\renewcommand{\Re}{{\mathbb{R}}}
\newcommand{\bx}{{\mathbf{x}}}
\newcommand{\bB}{{\mathbf{B}}}
\newcommand{\by}{{\mathbf{y}}}
\newcommand{\bd}{{\mathbf{d}}}
\newcommand{\bG}{{\mathbf{G}}}
\newcommand{\bL}{{\mathbf{L}}}
\newcommand{\bv}{{\mathbf{v}}}
\newcommand{\be}{{\mathbf{e}}}
\newcommand{\bz}{{\mathbf{z}}}
\newcommand{\bw}{{\mathbf{w}}}
\newcommand{\bq}{{\mathbf{q}}}
\newcommand{\bI}{{\mathbf{I}}}
\newcommand{\bK}{{\mathbf{K}}}
\newcommand{\bR}{{\mathbf{R}}}
\newcommand{\bJ}{{\mathbf{J}}}
\newcommand{\bQ}{{\mathbf{Q}}}
\newcommand{\bg}{{\mathbf{g}}}
\newcommand{\bs}{{\mathbf{s}}}
\newtheorem{lemma}{Lemma}
\newtheorem{theorem}{Theorem}
\newtheorem{definition}{Definition}
\newtheorem{remark}{Remark}
\newtheorem{assumption}{Assumption}
\newtheorem{corollary}{Corollary}
\begin{document}
	
	\title{\LARGE \bf Distributed Nonconvex Optimization with Double Privacy Protection and Exact Convergence}
\author{Zichong Ou, Dandan Wang, Zixuan Liu, and Jie Lu
\thanks{Zichong Ou, Dandan Wang and Jie Lu are with the School of Information Science and Technology, Shanghaitech University, 201210 Shanghai, China. J. Lu is also with the Shanghai Engineering Research Center of Energy Efficient and Custon AI IC, 201210 Shanghai, China. Email: {\tt\small ouzch, wangdd2, lujie@shanghaitech.edu.cn}.}
\thanks{Zixuan Liu is with the Engineering and Technology Institute (ENTEG), University of Groningen, 9747 AG Groningen, the Netherlands. Email: {\tt\small zixuan.liu@rug.nl}.}
}
	\maketitle
	
	\begin{abstract}
Motivated by the pervasive lack of privacy protection in existing distributed nonconvex optimization methods, this paper proposes a decentralized proximal primal-dual algorithm enabling double protection of privacy ($\text{DPP}^2$) for minimizing nonconvex sum-utility functions over multi-agent networks, which ensures zero leakage of critical local information during inter-agent communications. We develop a two-tier privacy protection mechanism that first merges the primal and dual variables by means of a variable transformation, followed by embedding an additional random perturbation to further obfuscate the transmitted information. We theoretically establish that $\text{DPP}^2$ ensures differential privacy for local objectives while achieving exact convergence under nonconvex settings. Specifically, $\text{DPP}^2$ converges sublinearly to a stationary point and attains a linear convergence rate under the additional Polyak-{\L}ojasiewicz (P-{\L}) condition. Finally, a numerical example demonstrates the superiority of $\text{DPP}^2$ over a number of state-of-the-art algorithms, showcasing the faster, exact convergence achieved by $\text{DPP}^2$ under the same level of differential privacy.
\end{abstract}

\begin{IEEEkeywords}
Distributed optimization, nonconvex optimization, differential privacy.
\end{IEEEkeywords}


\section{Introduction}
Decentralized optimization has garnered considerable attention recently. In real-world scenarios, a vast majority of optimization problems exhibit \textit{nonconvex} characteristics. These problems include but are not limited to distributed reinforcement learning \cite{molzahn2017survey}, dictionary learning \cite{Wai2015ACD} and wireless resource management \cite{lee2019deep}. Moreover, with the increasing scale of such problems, distributed nonconvex optimization techniques are becoming progressively urgent to develop, which employ a multi-agent network to enable cooperative optimization, and only allow interactions among neighboring agents. This paper studies the distributed optimization problem
\begin{equation}
   \min _{x \in \mathbb{R}^{d}} f(x)= \sum_{i=1}^{N} f_{i}(x) \label{p1}
\end{equation}
over an $N$-node multi-agent network, where the global objective function $f(x)$ is the sum of the nonconvex and smooth local objectives $f_1,\dots,f_N$, each associated with a node.

To address this problem, a collection of distributed nonconvex optimization algorithms have emerged, including primal gradient-based methods \cite{zeng_nonconvex_2018,di2016next,sun2016sonata} and primal-dual methods \cite{hong2016convergence,mancino2023decentralized,yi2022sublinear,sun2018distributed,sun2019distributed,hong2017prox,alghunaim2022unified,yi2021linear}. Specifically, \cite{zeng_nonconvex_2018} shows that the well-known Decentralized Gradient Descent (DGD) and Proximal DGD (Prox-DGD) \cite{nedic2009distributed} asymptotically converge to the set of stationary solutions for nonconvex objectives, and \cite{di2016next,sun2016sonata,hong2016convergence,mancino2023decentralized,yi2022sublinear,sun2018distributed,sun2019distributed,hong2017prox,alghunaim2022unified,yi2021linear} improve the convergence rate to a sublinear rate of $\mathcal{O}(1/K)$ (where $K$ denotes the number of iterations). Moreover, under the additional Polyak-{\L}ojasiewicz (P-{\L}) condition, \cite{yi2022sublinear,yi2021linear} are shown to converge to the global optimum at a linear rate of $\mathcal{O}(\theta^K)$ (where $\theta\in(0,1)$). 

Despite their satisfactory convergence performance, the aforementioned algorithms heavily rely on the communication of local information to achieve consensus, which inadvertently lead to privacy leakage of sensitive data (including local decisions, local objective functions and their gradients). Existing approaches \cite{zeng_nonconvex_2018,di2016next,sun2016sonata,hong2016convergence,mancino2023decentralized,yi2022sublinear,sun2018distributed,sun2019distributed,hong2017prox,alghunaim2022unified,yi2021linear} typically require nodes to share their local decisions with neighboring agents, potentially exposing private information. Furthermore, gradient-tracking-based methods \cite{di2016next,sun2016sonata} inherently expose gradient information over iterations, creating additional vulnerabilities. Of particular concern is that local decisions often contain highly sensitive data, such as personal medical records \cite{KIM201495} and precise locations of sensor nodes in surveillance networks \cite{Zhang2019TIFS}. Moreover, in multi-robot coordination systems \cite{Zhang2017ADMMBP}, even gradient information can expose movement directions and operational patterns, posing significant security risks. In addition, the frequent exchanges of model parameters (i.e., decision variables) may lead to the disclosure of the raw dataset \cite{Carlini2018TheSS}. 

\begin{table*}[!t]
    \renewcommand{\arraystretch}{1.3} 
    \caption{Comparison to state-of-the-art algorithms with differential privacy. Here, for $x_a,x_b\in\mathbb{R}^d$ and $i_0=1,\dots,N$, we define the differentiable functions $f_{i_0}^{(h)}:\mathbb{R}^d\rightarrow \mathbb{R},h=1,2$ with gradients $\nabla f_{i_0}^{(h)}$ and $\Delta g_{i_0}^{(h)}=\nabla f_{i_0}^{(h)}(x_a)-\nabla f_{i_0}^{(h)}(x_b)$. We denote $K$ as the number of iterations, and $\theta\in(0,1)$.}
    \centering
    \scriptsize
    \begin{tabular}{|c|c|c|c|c|c|c|}
        \hline
        \multicolumn{1}{|c|}{\multirow{2}{*}{Algorithm}}& Problem & DP & Extra    & Diminishing  & Exact & Convergence \\
        &type&guarantee&conditions&stepsize/noise& convergence & rate\\
        \hline
        PrivSGP-VR \cite{zhu2024privsgp}& nonconvex & $(\epsilon,\delta)$-DP & {bounded $\|\nabla f_i-\nabla f\|$}  & stepsize & \ding{55} & {$\mathcal{O}(1/\sqrt{K})$} \\
        \hline
        DIFF2 \cite{DIFF2} & nonconvex & $(\epsilon,\delta)$-DP & bounded $\|\nabla f_i\|$ & stepsize & \ding{55} & $\mathcal{O}(1/\sqrt{K})$ \\
        \hline
        \cite{wang2023decentralized}& nonconvex & ($\epsilon,\delta$)-DP  &  bounded $\|\nabla f_i\|$& stepsize &\ding{51} & asymptotic \\
        \hline
        \cite{wang2024robust}& convex & $\epsilon$-DP & bounded $\|\nabla f_i\|$ & stepsize &\ding{51} & asymptotic \\
        \hline
        DMSP\cite{Huang2015Diff}& strongly convex & $\epsilon$-DP  & bounded $\|\nabla f_i\|$ & stepsize,noise &\ding{55} &asymptotic\\
        \hline
        \multirow{2}{*}{DiaDSP\cite{Ding2022TAC}}& \multirow{2}{*}{strongly convex} & \multirow{2}{*}{$\epsilon$-DP}& bounded $\|\nabla f^{(1)}_{i_0}-\nabla f^{(2)}_{i_0}\|$ & \multirow{2}{*}{noise} & \multirow{2}{*}{\ding{55}} & \multirow{2}{*}{$\mathcal{O}(\theta^K)$}  \\
        & &&$\Delta g_{i_0}^{(1)}=\Delta g_{i_0}^{(2)}$&&&\\
        \hline
        \multirow{2}{*}{eDP-TN \cite{Yuan2024DistributedNG}} & \multirow{2}{*}{strongly convex} & \multirow{2}{*}{$\epsilon$-DP} & bounded $\|\nabla f^{(1)}_{i_0}-\nabla f^{(2)}_{i_0}\|$ & \multirow{2}{*}{noise} & \multirow{2}{*}{\ding{51}} & \multirow{2}{*}{$\mathcal{O}(\theta^K)$}\\
        & &&$\Delta g_{i_0}^{(1)}=\Delta g_{i_0}^{(2)}$&&&\\
        \hline
        \multirow{2}{*}{PPDC\cite{XIE2025112338}}& nonconvex & \multirow{2}{*}{$\epsilon$-DP}& \multirow{2}{*}{bounded $\|\nabla f_i\|$} & \multirow{2}{*}{noise} & \ding{55} & $\mathcal{O}(1/K)$ \\
        &P-{\L} condition & & & & \ding{55} &$\mathcal{O}(\theta^K)$\\
        \hline
        \multirow{2}{*}{\textbf{This paper}}& nonconvex & \multirow{2}{*}{$\epsilon$-DP} & \multirow{2}{*}{bounded $\|\nabla f^{(1)}_{i_0}-\nabla f^{(2)}_{i_0}\|$}  &  \multirow{2}{*}{noise}& \ding{51}& $\mathcal{O}(1/K)$ \\
        & P-{\L} condition& & & & \ding{51} & $\mathcal{O}(\theta^K)$\\
        \hline
    \end{tabular}
    \label{tab:my_label}
\end{table*}

To preserve local information, differential privacy (DP) has received significant attention in recent works. The core mechanism of DP involves injecting carefully designed noise into transmitted information, thereby preventing eavesdroppers from inferring private data based on their observations. In decentralized learning \cite{zhu2024privsgp,DIFF2}, ($\epsilon,\delta$)-DP is commonly adopted, where $\epsilon$ quantifies the privacy guarantee against distinguishing outputs from adjacent datasets, allowing a $\delta$ probability of failure. However, due to the accumulation of noise and the use of stochastic gradients, these methods can only guarantee sublinear convergence of $\mathcal{O}(1/\sqrt{K})$ to a neighborhood of the optimal solutions. While \cite{wang2023decentralized} achieves exact convergence via vanishing stepsizes, it sacrifices convergence rate, only ensuring asymptotic convergence. 

For stricter privacy requirements (such as protecting sensitive medical or financial data \cite{wang2022decentralized}), $\epsilon$-DP ($\delta=0$) is particularly suitable. Yet, static noises under $\epsilon$-DP lead to a accumulative explosion of parameters \cite{mironov2017renyi}, prompting existing $\epsilon$-DP methods \cite{Huang2015Diff,Ding2022TAC,xie_compressed_2023,XIE2025112338,huang_differential_2024,Yuan2024DistributedNG} to employ decaying noises for convergence. Under strong convexity, the methods in \cite{Ding2022TAC,xie_compressed_2023,huang_differential_2024} achieve linear convergence to a neighborhood of the optimum. Meanwhile, \cite{XIE2025112338} achieves sublinear convergence to a neighborhood of stationarity for nonconvex objectives and linear convergence to the global optimum under the additional P-{\L} condition. Notably, as is proven in \cite{huang_differential_2024}, gradient-tracking algorithms cannot achieve $\epsilon$-DP and exact convergence simultaneously, which limits the works in \cite{Huang2015Diff,Ding2022TAC,xie_compressed_2023,XIE2025112338,huang_differential_2024} to suboptimal convergence only. The recent studies \cite{wang2024robust,Yuan2024DistributedNG} achieve exact convergence with $\epsilon$-DP. However, \cite{wang2024robust} relies on convexity and \cite{Yuan2024DistributedNG} requires more stringent strong convexity as well as additional assumptions, as is stated in Table~\ref{tab:my_label}. 

In this paper, we design a \emph{\underline{D}ecentralized \underline{P}roximal \underline{P}rimal-dual algorithm enabling \underline{D}ouble \underline{P}rivacy \underline{P}rotection} ($\text{DPP}^2$) for addressing a class of nonconvex optimization problems over multi-agent networks. In $\text{DPP}^2$, each node minimizes an augmented-Lagrangian-like function comprising a linearized objective function and a proximal term, which is followed by a dual ascent step. 
We then introduce an encryption strategy, called \textit{double privacy protection}, which effectively protects local private information from being eavesdropped by adversaries during local communications.
The main contributions of this paper are highlighted as follows:
\begin{itemize}
    \item[1)] \textbf{A novel privacy protection strategy:} We propose a novel two-tier privacy protection strategy for our proposed algorithm, referred to as double privacy protection. The first-tier privacy protection integrates dual variables into transmissions of both \textit{local decisions} and \textit{gradients}, ensuring the security of them during local exchanges. The second-tier privacy protection incorporates decaying Laplace noises into transmission for preserving \textit{local objectives}. The two tiers of protection complement each other, leading to strong privacy and convergence guarantees as is stated in Table~\ref{tab:my_label}.
    
    \item[2)] \textbf{Differential privacy guarantee:} We prove that the proposed double privacy protection strategy achieves $\epsilon$-DP for protecting local objectives from being eavesdropped by adversaries. This is more stringent than $(\epsilon,\delta)$-DP achieved by \cite{zhu2024privsgp,DIFF2,wang2023decentralized}.
    
    \item[3)] \textbf{Exact convergence:} In addition to the $\epsilon$-DP guarantee, $\text{DPP}^2$ also ensures exact convergence for nonconvex problems. This improves the suboptimality results in \cite{Huang2015Diff,Ding2022TAC,xie_compressed_2023,XIE2025112338,huang_differential_2024} (which also employ decaying Laplace noises) and extends the implementation of \cite{wang2024robust} and \cite{Yuan2024DistributedNG} to nonconvex problems.

    \item[4)] \textbf{Fast convergence under mild conditions:} $\text{DPP}^2$ attains a $\mathcal{O}(1/K)$ sublinear rate of convergence to a stationary point for the nonconvex problem, outperforming the existing algorithms with privacy protection that only guarantee asymptotic convergence \cite{wang2023decentralized,Huang2015Diff,gade2018privacy,lou2017privacy,wang2022decentralized}. Moreover, a linear convergence rate is achieved to reach the global optimum under the P-{\L} condition, which is a relaxation of strong convexity assumed in \cite{Ding2022TAC,DingTie2018CDC,Huang2015Diff,Yuan2024DistributedNG}. We also weaken the assumption of bounded gradients in \cite{wang2023decentralized,Huang2015Diff,wang2024robust,DIFF2} to $\delta$-adjacency\footnote{The parameter $\delta$ in $\delta$-adjacency is a distinct concept from the $\delta$ in the ``classic'' $(\epsilon,\delta)$-DP, and there is no relation between the two $\delta$ symbols.} (stated in Definition~\ref{def Adjacency}) and require milder assumptions than the methods in \cite{Ding2022TAC,Yuan2024DistributedNG}.
\end{itemize}

The rest of paper is organized as follows: Section~\ref{section: problem formulation} formulates the distributed optimization problem. Section~\ref{section algorithm design} introduces the development of $\text{DPP}^2$. Section~\ref{section:convergence analysis} provides its convergence results and Section~\ref{section:privacy} analyzes its differential privacy guarantee. Moreover, Section~\ref{section:simulation} compares $\text{DPP}^2$ with related works via a numerical example. Finally, Section~\ref{section conclusion} concludes the paper.

\textbf{Notation:} Given any differentiable function $f$, $\nabla f$ denotes the gradient of $f$. Let $\operatorname{Null}(\cdot)$ represent the null space of a given matrix argument; additionally, we define $\mathbf{1}_n$ ($\mathbf{0}_n$) and $\mathbf{I}_n$ ($\mathbf{O}_n$) as the column one (zero) vector and identity matrix (zero matrix) of dimension $n$, respectively. We use $\langle \cdot,\cdot \rangle$ to denote the Euclidean inner product, $\otimes$ for the Kronecker product, and $\|\cdot\|$ for the $\ell_2$ norm. For any two matrices $\mb{A},\mb{B}\in \mathbb{R}^{d\times d}$, $\mb{A}\succ \mb{B}$ means $\mb{A}-\mb{B}$ is positive definite, and $\mb{A} \succeq \mb{B}$ means $\mb{A}-\mb{B}$ is positive semi-definite. Let $\lambda_i^\mb{A}$ denote the $i$-th largest eigenvalue of $\mb{A}$, and $\mb{A}^{\dagger}$ the Moore-Penrose inverse of $\mb{A}$. If $\mb{A}$ is symmetric and $\mb{A} \succeq \mathbf{O}_d$, for $\mb{x}\in \mathbb{R}^d$, $\|\mb{x}\|^2_\mb{A}:=\mb{x}^{\mathsf{T}}\mb{A}\mb{x}$. For a probability space $\Omega$ and a random variable $\xi\in \Omega$, denote $\mathbb{P}(\xi|\Omega)$ as the probability $\xi$ on $\Omega$ and $\mathbb{E}(\xi)$ as the expectation of $\xi$. For a given parameter $\theta$, $\operatorname{Lap}(\theta)$ denotes the Laplace distribution with probability density function 
$f_L(x,\theta)=\frac{1}{2\theta}e^{-\frac{|x|}{\theta}}$.
	
	\section{Problem Formulation}\label{section: problem formulation}
This section formulates the distributed optimization problem and presents the definitions pertinent to differential privacy.

\subsection{Distributed Optimization Problem}
Consider a network of $N$ nodes, which is modeled as a connected, undirected graph $\mathcal{G} = (\mathcal{V},\mathcal{E})$, where the vertex set $\mathcal{V} = \{1, \dots, N\}$ is the set of $N$ nodes and the edge set $\mathcal{E} \subseteq \{\{i,j\}|i,j \in \mathcal{V},i \neq j\}$ describes the underlying interactions among the nodes. Through the network, each node $i \in \mathcal{V}$ only communicates with its neighboring nodes in $\mathcal{N}_i=\{j\in\mathcal{V}:\{i,j\}\in\mathcal{E}\}$. All the nodes collaboratively solve problem~\eqref{p1},
where the local objective $f_i : \mathbb{R}^d \rightarrow \mathbb{R}$ is differentiable and privately owned by node $i$. Next, We impose the following assumptions on problem~\eqref{p1}:
\begin{assumption}\label{assumption smooth}
The local objective function $f_i:\Re^{d}\rightarrow\Re$ is $M_i$-smooth for some $M_i\geq 0$, i.e.,
\begin{equation*}
\|\nabla f_i(x) - \nabla f_i(y)\|\leq M_i\|x-y\|,\quad\forall x,y\in \Re^{d}.
\end{equation*}
\end{assumption}
\begin{assumption}\label{assumption optimum}
The function $f(x)$ is lower bounded by $f^*:=\inf_{x} f(x)$ over $x\in\mathbb{R}^d$, i.e., $
	f(x) \geq f^* > -\infty.$
\end{assumption}

Assumptions~\ref{assumption smooth} and~\ref{assumption optimum} are commonly adopted in existing works on distributed nonconvex optimization \cite{alghunaim2022unified, sun2018distributed, sun2019distributed,mancino2023decentralized, yi2021linear, hong2016convergence,yi2022sublinear,zhu2024privsgp,DIFF2,wang2023decentralized,XIE2025112338}. 

To solve problem~\eqref{p1} over the graph $\mathcal{G}$, we let each node $i \in \mathcal{V}$ maintain a local estimate $x_i \in \mathbb{R}^d$ of the global decision $x \in \mathbb{R}^d$ in problem~\eqref{p1}, and define 
\begin{equation*}
\tilde{f}(\mathbf{x}) := \sum _{i \in \mathcal{V}} f_i(x_i),\quad \mb{x} = (x_1^{\mathsf{T}}, \dots, x_N^{\mathsf{T}})^{\mathsf{T}} \in \Re ^{Nd}.
\end{equation*}
It has been shown in \cite{mokhtari2016decentralized} that problem~\eqref{p1} can be equivalently transformed  into 
\begin{equation}
\begin{aligned}
\underset{\mathbf{x} \in \mathbb{R}^{N d}}{\operatorname{minimize}}\,\,  \tilde{f}(\mb{x}) \quad
\operatorname{subject\;to}\,\,  \mb{L}^{\frac{1}{2}} \mb{x}=0,
\end{aligned} \label{p2}
\end{equation}
where $\mb{L}\in \mathbb{S}^{Nd}$ satisfies the following assumption.
\begin{assumption}\label{assumption GL}
	The symmetric matrix $\mb{L}\in \mathbb{S}^{Nd}$ is positive semidefinite (i.e., $\mathbf{L}\succeq \mathbf{O}_{Nd}$) and has null space $\operatorname{Null}(\mb{L}) = \mathcal{S} := \{\mb{x} \in \Re ^{Nd}|x_1 = \cdots = x_N\}$.
\end{assumption}

Assumption~\ref{assumption GL} aligns with the consensus constraint in \eqref{p2} and is prevalent in the literature, e.g., \cite{zeng_nonconvex_2018,di2016next,alghunaim2022unified,mancino2023decentralized,yi2022sublinear,yi2021linear,wang2023decentralized,wang2024robust,Huang2015Diff,Ding2022TAC,XIE2025112338,scaman2017optimal}.

Note that problem~\eqref{p1} and \eqref{p2} share the same optimal value. Clearly, under Assumption~\ref{assumption smooth}, $\tilde{f}$ is $\bar{M}-$smooth, i.e.,
\begin{equation}\label{smooth tilde f}
    \|\nabla \tilde{f}(\bx)-\tilde{f}(\by)\|\leq \bar{M}\|\bx-\by\|, \quad \forall \bx,\by\in\mathbb{R}^{Nd},
\end{equation}
where $\bar{M}=\max\{M_1,M_2,\dots,M_N\}$.

\subsection{Differential Privacy}
In the communication network, each node transmits local information to its neighbors, which may suffer from information leakage. As the potential attacker has the access to all communication channels, all the information available to the attacker is collected in the observation ${\mathcal{O}}$. To measure the privacy level, we introduce the following definitions on differential privacy.

\begin{definition}\label{def Adjacency}
	($\delta$-\textbf{Adjacency} \cite{DingTie2018CDC,Huang2015Diff}): Given $\delta>0$, two function sets $F^{(1)}=\{f_i^{(1)}\}_{i=1}^N$ and $F^{(2)}=\{f_i^{(2)}\}_{i=1}^N$ are said to be $\delta$-adjacent if there exists $i_{0}$ such that $f_{i}^{(1)}=f_{i}^{(2)}$ for $ i\neq i_0$ and
	\begin{equation}\label{ineq: def delta adjacency}
        \operatorname{Dis}(f_{i_0}^{(1)},f_{i_0}^{(2)})\overset{\triangle}{=}\sup_{x\in\mathbb{R}^d}\|\nabla f_{i_0}^{(1)}(x)-\nabla f_{i_0}^{(2)}(x)\|\leq \delta.
    \end{equation}
\end{definition}

Building on the concept of ``classic'' adjacency on datasets (e.g. \cite{DIFF2,zhu2024privsgp,wang2023decentralized}), we additionally stipulate that the difference between two datasets, measured by a certain metric, should not exceed $\delta$ under a certain metric.
This definition is commonly adopted in the field of distributed optimization \cite{Ding2022TAC,DingTie2018CDC,wang2024robust,huang_differential_2024,Yuan2024DistributedNG,XIE2025112338}. It relaxes the standard assumption of bounded gradients, i.e., $\|\nabla f_i(x_i)\|\leq C,\forall i\in\mathcal{V}$ (e.g., \cite{wang2023decentralized,Huang2015Diff,DIFF2,XIE2025112338}).
To see the relationship, when we consider that $\|\nabla f_i(x_i^k)\|\leq C, \forall k=1,\dots, K$ with $C=\frac{\delta}{2}$, and then we derive $\|\nabla f^{(1)}_{i_0}(x)-\nabla f^{(2)}_{i_0}(x)\|\leq \|\nabla f^{(1)}_{i_0}(x)\|+\|\nabla f^{(2)}_{i_0}(x)\|\leq 2C=\delta$. 
Thus, with the bounded-gradient condition above, $\delta$-adjacency reduces to the ``classic'' notion of adjacency.

\begin{definition}\label{def DP}
	(\textbf{$\epsilon$-Differential Privacy} \cite{DingTie2018CDC,Huang2015Diff}): Given $\delta,\epsilon>0$, for any $\delta$-adjacent function sets $F^{(1)}$ and $F^{(2)}$ and any observation $\mathcal{O}$, a distributed algorithm is said to be $\epsilon$-differentially private if
	$$
	\mathbb{P}(F^{(1)}|\mathcal{O})\leq e^{\epsilon}\mathbb{P}(F^{(2)}|\mathcal{O}),
	$$
	where $\mathbb{P}(F^{(h)}|\mathcal{O}),h=1,2$ is the conditional probability which denotes the probability of inferring $F^{(h)}$ from observation $\mathcal{O}$.
\end{definition}

Intuitively, differential privacy measures how difficult it is for an adversary to distinguish between two adjacent function sets merely by an observation and smaller privacy budget $\epsilon$ means that the two function sets are more indistinguishable based on the observation $\mathcal{O}$. 

Note that the $\epsilon$-Differential Privacy is a more strict than ($\epsilon,\delta$)-Differential Privacy (($\epsilon,\delta$)-DP), as is adopted in \cite{zhu2024privsgp,DIFF2,wang2023decentralized}, which allows for a negligible probability $\delta$ of failure. In this paper, we specifically consider the definition of $\epsilon$-DP as it is particularly well-aligned with scenarios demanding both exact convergence and rigorous privacy guarantees.

\section{Algorithm Development}\label{section algorithm design}

In this section, we develop a distributed algorithm for solving the nonconvex optimization problem~\eqref{p2} (and equivalently, problem~\eqref{p1}), which intends to protect the information privacy of each node while maintaining exact convergence.

To deal with the nonconvex objective function, we first consider the Augmented Lagrangian (AL) function $\operatorname{AL}(\mb{x},\mb{v})=\tilde{f}(\mb{x})+(\mb{v})^{\mathsf{T}} \mb{L}^{\frac{1}{2}}\mb{x}+\frac{\rho}{2}\|\mb{x}\|^2_\mb{L}$,
where $\mb{v}=(v_1^{\mathsf{T}}, \dots, v_N^{\mathsf{T}})^{\mathsf{T}}\in \Re ^{Nd}$ denotes the Lagrangian multiplier and $\rho>0$ is the penalty parameter. We then present the following primal-dual paradigm: Starting from any $\mb{x}^0,\mb{v}^0\in \mathbb{R}^{Nd}$, for each $k \geq 0$,
\begin{align}
	\label{argmin approximation primal}  \mb{x}^{k+1}=&\underset{\mathbf{x} \in \mathbb{R}^{N d}}{\arg \min}\;\tilde{f}(\mb{x}^k) + \langle \nabla \tilde{f}(\mb{x}^k), \mb{x}-\mb{x}^k \rangle +\langle\mb{v}^{k}, \mb{L}^{\frac{1}{2}} \mb{x}\rangle+\frac{\rho}{2}\|\mb{x}\|_{\mb{L}}^{2}+ \frac{1}{2} \|\mb{x}-\mb{x}^k\|^2_{\mb{B}}, \\ 
	\label{argmin approximation dual} {\mb{v}}^{k+1}=&{\mb{v}}^{k}+\rho \mb{L}^{\frac{1}{2}} \mb{x}^{k},
\end{align}
where $\bx^k$ and ${\mb{v}}^k$ are the primal and dual variables at iteration $k$. In \eqref{argmin approximation primal}, we linearize $\tilde{f}(\mb{x})$ at $\mb{x}^k$ as $\tilde{f}(\mb{x}^k)+\langle\nabla \tilde{f}(\mb{x}^k),\mb{x}-\mb{x}^k\rangle$ and embed a proximal term $\frac{1}{2}\|\mb{x}-\mb{x}^{k}\|^2_{\mb{B}}$ with $\mb{B}\in \mathbb{S}^{Nd}$ into the AL function. Moreover, \eqref{argmin approximation dual} emulates a dual ascent step, and the corresponding estimate ``dual gradient" is obtained by evaluating the constraint residual at $\mb{x}^{k}$. Here, we impose a condition on $\bB$ to satisfy $\mb{B}+\rho \mb{L}\succ \mb{O}_{Nd}$, which ensures the well-definedness and unique existence of $\mb{x}^{k+1}$ in \eqref{argmin approximation primal}. Then, the first-order optimality condition of \eqref{argmin approximation primal} gives
\begin{equation}\label{first-order optimality condition}
    \nabla \tilde{f}(\mb{x}^k) + \mb{L}^{\frac{1}{2}} \mb{v}^k + \rho \mb{L} \mb{x}^{k+1}+ \mb{B}(\mb{x}^{k+1} - \mb{x}^k)=0.
\end{equation}
By letting	
\begin{equation}\label{G=B+rhoL-1}
    \mb{G}:= (\mb{B} \!+\! \rho \mb{L})^{-1},
\end{equation}
we rewrite \eqref{argmin approximation primal} as
\begin{equation}
	\mb{x}^{k+1} = \mb{x}^k - \bG(\nabla \tilde{f}(\mb{x}^k) + \mb{L}^{\frac{1}{2}} \mb{v}^k + \rho \mb{L} \mb{x}^k). \label{x^k+1 vk}
\end{equation}

Note that due to the weight matrices $\mathbf{L}^{\frac{1}{2}}$ and $\mathbf{L}$ in \eqref{x^k+1 vk} and \eqref{argmin approximation dual}, our method in its current form cannot be executed in the distributed way. Moreover, to compute $\bL\bx^k,\bL^{\frac{1}{2}}\bv^k$ and $\bL^{\frac{1}{2}}\bx^k$ in \eqref{x^k+1 vk} and \eqref{argmin approximation dual} over $\bG$, the nodes have to share their local portions in $\bx^k$ and $\bv^k$, which risks information leakage. Below we address these issues by first introducing our two-tier privacy protection strategy.

\subsection{First-tier Privacy Protection}\label{section:first-tier}

To formalize the first-tier privacy protection, we apply the following variable transformations
\begin{equation}\label{variable transformation}
    \bq^k= \bL^{\frac{1}{2}}\bv^k, \quad \mb{d}^k = \frac{1}{\rho}(\mb{L}^{\frac{1}{2}})^{\dagger} \mb{v}^k,
\end{equation}
which allows us to substitute $\mb{L}^{\frac{1}{2}} \mb{v}^k$ in \eqref{x^k+1 vk} with $ \eta^k\bq^k+\rho \mb{L}(1-\eta^k)\mb{d}^k$ for some $\eta^k\in(0,1)$. 
This substitution necessitates that $\bq^k,\mathbf{d}^k\in \mathcal{S}^{\perp}\, \forall k \geq 0$, where $\mathcal{S}^{\perp} := \{\mb{x}\in \Re ^{Nd}| \, x_1 + \cdots + x_N = \mb{0}\}$ is the orthogonal complement of $\mathcal{S}$, and can be trivially satisfied by initializing $\bq^0,\mb{d}^0 \in \mathcal{S}^{\perp}$, or simply $\bq^0=\bd^0=0$. Then, starting from arbitrary $\mb{x}^0\in \mathbb{R}^{Nd}$, for any $k \geq 0$, we rewrite \eqref{argmin approximation primal}--\eqref{argmin approximation dual} as
\begin{align} 
    \label{tildeyk} &\by^k = \bx^k+(1-\eta^k)\bd^k,\\
    \label{zk original}&\bz^k = \nabla \tilde{f}(\mb{x}^k)+\eta^k\bq^k+\rho \mb{L} \by^k,\\
    \label{xk+1 original} &\mb{x}^{k+1} = \mb{x}^k - \mb{G} \bz^k,\\ 
    \label{qk+1 original} &{\mb{d}}^{k+1}=\eta^k{\mb{d}}^{k}+{\mb{y}}^{k},\quad
     \bq^{k+1}=\eta^k\bq^k+\rho \mb{L}\by^k.
\end{align}

Notably, the sequence $\{\eta^k\}$ should be predetermined as an input to the algorithm. Each element of this sequence can be randomly generated within the interval $(0,1)$, or alternatively, one may simply set $\eta^k=\eta$ where $\eta\in(0,1)$. 

We also note from \eqref{G=B+rhoL-1} that the condition $\bB+\rho\bL\succ\mathbf{O}_{Nd}$ is equivalent to $\bG\succ\mathbf{O}_{Nd}$. In our implementation, we leverage this by directly constructing a positive definite matrix $\bG$ in the update \eqref{xk+1 original}, thereby avoiding the explicit construction of $\bB$ and the expensive computation of $(\bB+\rho \bL)^{-1}$ required in \eqref{x^k+1 vk}. In addition, the weight matrix $\bL$ and $\bG$ serve the purpose of information propagation in \eqref{tildeyk}--\eqref{qk+1 original}. Moreover, we let the matrix $\bG$ as follows:
\begin{equation}
	\mb{G}=\alpha \mb{I}_{Nd}-\beta \mb{L}, \label{def G}
\end{equation}
with $\alpha>0$ and $0<\beta<\alpha/\lambda_1^{\mb{L}}$, so that $\mb{G}\succ \mb{O}_{Nd}$, and $\mb{G}$ and $\mb{L}$ are commutative in matrix multiplication, i.e., $\mb{G}\mb{L}=\mb{L}\mb{G}$.

To implement the proposed algorithm in a distributed manner, we divide $\bx^k,\bd^k,\by^k,\bz^k$ as $\bx^k=((x_1^k)^{\mathsf{T}},\dots,(x_N^k)^{\mathsf{T}})^{\mathsf{T}}$, $\bd^k=((d_1^k)^{\mathsf{T}},\dots,(d_N^k)^{\mathsf{T}})^{\mathsf{T}}$, $\by^k=((y_1^k)^{\mathsf{T}},\dots,(y_N^k)^{\mathsf{T}})^{\mathsf{T}}$ and $\bz^k=((z_1^k)^{\mathsf{T}},\dots,(z_N^k)^{\mathsf{T}})^{\mathsf{T}}$, and let each node $i$ maintain $x_i^k,d_i^k,y_i^k$ and $z_i^k$.
To meet Assumption~\ref{assumption GL}, we choose 
\begin{equation*}
    \mb{L}= \mb{P} \otimes \mb{I}_d,
\end{equation*}
where $\mb{P} \in \mathbb{S}^{N}$ satisfies $\mb{P}\succeq\mb{O}_N$ with a neighbor-sparse structure, i.e., the off-diagonal entry $p_{ij}$ is zero if nodes $i$ and $j$ are disconnected (i.e., ${i,j} \notin \mathcal{E}$). As shown in \cite{Wu2020AUA}, such a matrix $\mb{P}$ can be determined in a fully decentralized manner by the nodes without central coordination. We can determine $\mb{P}$ as a graph Laplacian matrix and it can be executed in a communication step through the network (detailed in Section~\ref{section:distributed implementation}).

With the above settings, each node $i$ does not directly transmit $x_i^k$ or $\nabla f_i(x_i^k)$ but merges $(1-\eta^k)d_i^k$ and $\eta^k q_i^k$, respectively, during the communication procedure, thereby \textit{preventing eavesdropping on local decisions and gradients}.

Note that the randomness of $\eta^k$ has no impact on the update of $\bx^{k+1}$, as is analyzed in Section~\ref{section:convergence analysis}. Therefore, one cannot observe the same $\mathcal{O}$ (in Definition~\ref{def DP}) generated by $\text{DPP}^2$ with different sequences of $\eta^k$, and thus the first-tier privacy protection lies beyond the reach of the standard differential privacy (DP) analysis and cannot by itself ensure the confidentiality of \textit{local objectives} (or dataset). To tackle this issue, we develop the second-tier privacy protection scheme.



\subsection{Second-tier Privacy Protection} \label{subsection second-tier}
To further enhance data privacy and enable differential privacy for the local data, we incorporate the perturbation variables $\mb{e}^{k}=((e_1^k)^{\mathsf{T}},\dots,(e_N^k)^{\mathsf{T}})^{\mathsf{T}},\bw^k=((w_1^k)^{\mathsf{T}},\dots,(w_N^k)^{\mathsf{T}})^{\mathsf{T}}\in \mathbb{R}^{Nd}$ into the transmission of $\nabla \tilde{f}(\bx^k)$ and $\bx^k$, respectively. Using \eqref{def G}, we rewrite \eqref{tildeyk}--\eqref{qk+1 original} as
\begin{align}
	\label{yk new}&\by^{k} = \mb{x}^k+(1-\eta^k)\mb{d}^k+\bw^k, \\
    \label{zk new}&\bz^k  = \nabla \tilde{f}(\mathbf{x}^k)+\eta^k \mb{q}^k+\rho\mb{L}\by^{k}+\be^k,\\
	\bx^{k+1}=&\bx^k+\bw^k-\alpha\underbrace{(\nabla\tilde{f}(\bx^k)+\eta^k\bq^k+\rho \bL \by^k)}_{\bz^k-\be^k}+\beta \bL\underbrace{(\nabla\tilde{f}(\bx^k)+\eta^k\bq^k+\rho \bL \by^k+\be^k)}_{\bz^k},\label{xk+1 new}\\
	\mathbf{d}^{k+1}  =& \eta^k\mb{d}^k+\mb{y}^k,\quad
	\bq^{k+1} =\eta^k \bq^{k} + \rho \bL\by^k,\label{qk+1 new}
\end{align}
where, in \eqref{yk new} and \eqref{zk new}, the perturbations $\bw^k$ and $\be^k$ are embedded into the transmission of $\by^k$ and $\bz^k$, respectively. Notably, $\be^k$ is only merged into the transmission of $\bz^k$, resulting in the update of \eqref{xk+1 new}.

\textbf{Generation of perturbations}: To guarantee convergence, the perturbations need to vanish along iterations, and thus can be generated by incorporating Laplace noise, i.e., $e_i^k\sim \operatorname{Lap}(\theta_{e,i}^k)$ and $w_i^k\sim \operatorname{Lap}(\theta_{w,i}^k)$. Here, $\theta_{e,i}^k=r_i^k u_{e,i}$ and $\theta_{w,i}^k=r_i^k u_{w,i}$ with $u_{e,i},u_{w,i}>0$ and the noise decay rate $r_i\in(0,1)$. Under such setting, our proposed algorithm is able to \textit{guarantee differential privacy for local objectives}, as will be shown in Section~\ref{section:privacy}.

The above diminishing noise is also adopted in prior works \cite{Huang2015Diff,Ding2022TAC,xie_compressed_2023,XIE2025112338,huang_differential_2024,Yuan2024DistributedNG}. In addition to the protection for local objectives, it also safeguard local decisions and gradients. However, this privacy protection gradually weakens as the noise variance diminishes over iterations. This drawback can be addressed by our proposed first-tier privacy protection mechanism, which integrates dual variables into the transmission process of local decisions and gradients.

\subsection{Interplay Between Two Tiers of Protection}
The above two tiers of privacy protection complement each other in the following way.
Note that the first-tier privacy protection fundamentally differs from most existing privacy strategies that adopt stochastic noises (e.g., \cite{zhu2024privsgp,DIFF2,wang2023decentralized,wang2022decentralized,mironov2017renyi,Huang2015Diff,Ding2022TAC,xie_compressed_2023,XIE2025112338,huang_differential_2024,Yuan2024DistributedNG
}). It integrates the decisions/gradients with the dual variables, preventing the eavesdroppers from inferring local private information like $x_i^k$ and $\nabla f_i(x_i^k)$. Due to its design essence, the first-tier protection does not change the value of the primal variables, so that it cannot be evaluated by the standard DP.

On the other hand, the second-tier protection employs Laplace noises in transmission, so that it guarantees $\epsilon$-DP for local objectives (detailed in Section~\ref{section:privacy}). However, it suffers from gradually losing privacy protection of local decisions and gradients with vanishing noises that are often imposed to guarantee $\epsilon$-DP (e.g., \cite{Huang2015Diff,Ding2022TAC,xie_compressed_2023,XIE2025112338,huang_differential_2024,Yuan2024DistributedNG}). This issue can indeed be overcome by our first-tier protection, as it successfully obfuscates the eavesdroppers' observations. 

To summarize, both tiers of privacy protection are necessary. As will be shown shortly, they \textit{jointly guarantee both DP and exact convergence, maintaining protection even when noise variance approaches zero}. This advantage cannot be achieved by the state-of-the-art distributed nonconvex optimization methods.



\subsection{Distributed Implementation}\label{section:distributed implementation}
    We now illustrate the distributed implementation of the updates \eqref{yk new}--\eqref{qk+1 new} over graph $\mathcal{G}$. We consider the same choices of $\bL$ and $\bG$ as in Section~\ref{section:first-tier}. During each iteration $k$, every node $i$ exchanges encrypted local data $x_i^k+(1-\eta^k)d_i^k+w_i^k$ and $\nabla f_i(x_i^k)+\eta^k q_i^k+e_i^k+\rho\sum_{j \in \mathcal{N}_i \cup \{i\}} p_{ij} {y}_j^k$ with its neighbors. The implementation of \eqref{yk new}--\eqref{qk+1 new} can be distributed to each node $i$ as is shown in Algorithm~\ref{algrithm DPP2}.
\floatname{algorithm}{Algorithm}
\begin{algorithm}[htbp]
        \renewcommand{\thealgorithm}{1}
	\caption{$\text{DPP}^2$}    
	\label{algrithm DPP2}               
	\begin{algorithmic}[1]
	  \STATE \textbf{Input:} $\rho,\alpha,\beta,K>0$, $\{\eta^k\}_{k=0,\dots,K}\in(0,1)$, $\mb{P}\succeq\mathbf{O}_N$.
		\STATE \textbf{Initialization:} Each node $i \in \mathcal{V}$ sets $d_i^0=q_i^0=0$ and arbitrary $x_i^0\in\mathbb{R}^d$.
		\FOR{$k=0,\dots,K$} 
			\FOR{each node $i \in \mathcal{V}$} 
                    \STATE Generate $w_i^k\sim \operatorname{Lap}(\theta_{w,i}^k)$, $e_i^k\sim \operatorname{Lap}(\theta_{e,i}^k)$.
                    \STATE Compute ${y}_i^k =x_i^k+(1-\eta^k)d_i^k+w_i^k$ and send it to every neighbor $j \in \mathcal{N}_i$.
                    \STATE Compute ${z}_i^k = \nabla f_i(x_i^k)+\eta^k q_i^k+\rho \sum_{j \in \mathcal{N}_i \cup \{i\}} p_{ij} {y}_j^k+e_i^k$ and send it to every neighbor $j \in \mathcal{N}_i$.
                    \STATE Update the primal variable $x_i^{k+1} = x_i^k+w_i^k \!-\! \alpha ({z}_i^k-e_i^k)+\beta \sum_{j \in \mathcal{N}_i \cup \{i\}} p_{ij} {z}_j^k$.
                    \STATE Update the dual variable $d_i^{k+1}=\eta^k d_i^k+ y_i^k$.
                    \STATE Update the dual variable $q_i^{k+1}=\eta^k q_i^k+\rho \sum_{j \in \mathcal{N}_i \cup \{i\}} p_{ij} {y}_j^k.$
			\ENDFOR
		\ENDFOR
	\end{algorithmic}
\end{algorithm}

Note that both the updates of $z_i^k$ (in Line 7 of Algorithm. \ref{algrithm DPP2}) and $q_i^{k+1}$ (in Line 10 of Algorithm. \ref{algrithm DPP2}) contain the aggregation term $\sum_{j \in \mathcal{N}_i \cup \{i\}} p_{ij} {y}_j^k$. Therefore, the update of $q_i^{k+1}$ does not entail any extra communication expenses. Due to the information merging and variable perturbations, each node is able to preserve the privacy of its local objectives, decisions as well as their gradients during the communication phase. 
Accordingly, we refer to Algorithm~\ref{algrithm DPP2} as \emph{\underline{D}istributed \underline{P}roximal \underline{P}rimal-dual algorithm with \underline{D}ouble \underline{P}rotection of \underline{P}rivacy}, referred to as $\text{DPP}^2$.

	\section{Convergence Analysis}\label{section:convergence analysis}
This section provides the convergence analysis of $\text{DPP}^2$ under various nonconvex settings.

We first construct an equivalent form of \eqref{yk new}--\eqref{qk+1 new}. Let $\bq^0=\bd^0=\mathbf{0}$ so that $\bq^0=\rho\bL\bd^0$. Since the variable changes of $\bq^k= \bL^{\frac{1}{2}}\bv^k$ and $\mb{d}^k = \frac{1}{\rho}(\mb{L}^{\frac{1}{2}})^{\dagger} \mb{v}^k$ in \eqref{variable transformation} imply $\bq^k=\rho\bL\bd^k$, together with \eqref{qk+1 new}, we have $\bq^{k+1}=\rho\bL(\bx^k+\bd^k+\bw^k)=\rho\bL\bd^{k+1}$. Then, due to \eqref{def G}, we conclude by induction that \eqref{yk new}--\eqref{qk+1 new} are equivalent to 
\begin{align}
	\mathbf{x}^{k+1} =& \bx^k\!+\!\bw^k\!-\!\bG(\nabla\tilde{f}(\bx^k)+\bq^k+\rho\bL(\bx^k+\bw^k))\!+\beta\bL \be^k,\label{xk+1 final}\\
	\bq^{k+1}=&\bq^k+\rho\bL(\bx^k+\bw^k).\label{qk+1 final}
\end{align}
As a result, it can be seen that the parameter $\eta^k$ does not impact the convergence of $\text{DPP}^2$. With the equivalent form \eqref{xk+1 final}--\eqref{qk+1 final}, we establish the convergence results of $\text{DPP}^2$ under a variety of conditions. To this end, we introduce the following notations:
\begin{align}\label{def variable}
    &\mb{K} = (\mb{I}_N - \frac{1}{N} \mb{1}_N \mb{1}_N ^{\mathsf{T}}) \otimes \mb{I}_d,\quad \mb{J} = \frac{1}{N}\mb{1}_N \mb{1}_N ^{\mathsf{T}} \otimes \mb{I}_d,\notag \\
    & \bar{x}^k = \frac{1}{N}(\mb{1}_N^{\mathsf{T}} \otimes \mb{I}_d) \mb{x}^k, \quad \bar{\mb{x}}^k = \mb{J} \mb{x}^k, \notag\\
    & \mb{g}^k = \nabla \tilde{f}(\mb{x}^k), \quad \bar{\mb{g}}^k = \mb{J} \mb{g}^k,\notag \\
    & \mb{g}_a^k = \nabla \tilde{f}(\bar{\mb{x}}^k), \quad \bar{\mb{g}}_a^k = \mb{J} \mb{g}_a^k  = \mb{1}_N \otimes \nabla f(\bar{x}^k), \notag\\
    &\bar{\lambda}_\bL=\lambda_1^\bL, \quad \underline{\lambda}_\bL=\lambda_{N-1}^\bL,\quad \kappa_\bL={\bar{\lambda}_\bL}/{\underline{\lambda}_\bL}\geq 1,\notag\\
    &\bar{\lambda}_\bG=\lambda_2^\bG, \quad \underline{\lambda}_\bG=\lambda_{N}^\bG,\quad \kappa_\bG={\bar{\lambda}_\bG}/{\underline{\lambda}_\bG} \geq 1,\notag\\
    & \mb{s}^k=\mb{q}^k+\mb{g}_a^k.
\end{align}


\subsection{Stationarity Guarantee}
We first analyze the convergence result of $\text{DPP}^2$ under general nonconvexity. Here, we define the following:
\begin{align}\label{def parameter}
    \alpha =& \bar{c}_\alpha \bar{\lambda}_\bG, \quad \theta = \bar{c}_\theta \bar{\lambda}_\bG,\quad \bar{c}_\alpha>1,0<\bar{c}_\theta<1,\gamma>0,\notag\\
    \xi_1 =& \frac{\rho \bar{\lambda}_\bL}{2}(\frac{1}{\kappa_\bG}-\bar{c}_\theta)-(1+\frac{3\bar{M}^2}{4}+\frac{\bar{M}^2\bar{c}_\alpha}{2}),\notag\\
    \xi_2 =& \frac{1}{2}(1+\frac{1}{\gamma})\rho^2\bar{\lambda}_\bL+\frac{\bar{c}_\theta}{4}+\frac{1}{2}\bar{c}_\theta \rho\bar{\lambda}_\bL+\frac{11}{4}\notag\\
    &+\bar{M}^2(\frac{2}{\gamma}+\frac{1}{4}\bar{c}_\theta\rho\bar{\lambda}_\bL+\frac{1}{2}\bar{c}_\theta^2),\notag\\
    \xi_3 =& \frac{\bar{c}_\theta}{2}-\frac{5\gamma}{2}-\frac{1}{\rho^2\underline{\lambda}_\bL^2},\quad \xi_4=\frac{\bar{c}_\theta^2}{4}, \quad \xi_5=\frac{7\bar{c}_\theta^2}{4},\notag\\
    \xi_6 =& \frac{1}{4}, \quad \xi_7=\bar{M}+\frac{21}{2}\bar{M}^2,\notag\\
    \xi_8=&(4+{3\bar{M}^2}+2\bar{M}^2\bar{c}_\alpha)/(2\bar{\lambda}_\bL(\frac{1}{\kappa_\bG}-\bar{c}_\theta)),\notag\\
    \xi_9=& 1/\sqrt{\underline{\lambda}_\bL^2(\frac{\bar{c}_\theta}{2}-\frac{5\gamma}{2})}.
\end{align}
The parameters in \eqref{yk new}--\eqref{qk+1 new} are selected as follows:
\begin{align}
    &\kappa_\bL,\kappa_\bG\geq 1, \,\,1<\bar{c}_\alpha<\frac{\kappa_\bL}{\kappa_\bL-1},\,\, \bar{c}_\theta<\frac{1}{\kappa_\bG}, \,\,\gamma<\frac{\bar{c}_\theta}{5}, \label{range c alpha gamma}\\
    &\rho>\max\{\xi_8, \xi_9\},\label{range rho}\\
    &0<\bar{\lambda}_\bG<\min\{\frac{\xi_1}{\xi_2},\Big(-\xi_4+\sqrt{\xi_4^2+4\xi_3\xi_5}\Big)/(2\xi_5), \frac{\xi_6}{\bar{c}_\alpha\xi_7} \},\label{range bar lambda G}\\
    &\bar{\lambda}_\bG<\alpha < \frac{\xi_6}{\xi_7}, \quad \beta=\frac{\alpha-\bar{\lambda}_\bG}{\underline{\lambda}_\bL}.\label{range alpha}
\end{align}
We will show the well-posedness of the above parameters in the subsequent Lemma~\ref{Lemma vk+1 - vk}, which establishes the dynamics of the sequence
\begin{align}\label{def V} 
    V^k = &\frac{1}{2} \|\mb{x}^k\|^2_\mb{K} + \frac{1}{2}\|\mb{s}^k\|^2_{(\theta \bG+\frac{\bG\bQ}{\rho})\bK}+ \langle \mb{x}^k, \frac{1}{2}\theta\mb{K}\mb{s}^k \rangle+ f(\bar{x}^k) - f^*,
\end{align}
where we define $\bQ=\bL^{\dagger}$.
\begin{lemma}\label{Lemma vk+1 - vk}
    Suppose Assumptions~\ref{assumption smooth}, \ref{assumption optimum} and \ref{assumption GL} hold. Let $\{\mb{x}^k\}$ be the sequence generated by \eqref{yk new}--\eqref{qk+1 new} with the parameters selected by \eqref{range c alpha gamma}--\eqref{range alpha}. Then, for any $k \geq 0$,
	\begin{align}
		&V^{k+1} - V^k - (D_1\|\bw^k\|^2+D_2\|\be^k\|^2)\notag\\
        &\overset{\eqref{tilde Vk+1-tilde Vk leq 0}}{\leq}  - \|\mb{x}^k\|^2_{\bar{\lambda}_{\bG}(\xi_1-\xi_2\bar{\lambda}_{\bG})\mb{K} }- \|\mb{s}^k \|^2_{\bar{\lambda}_\bG^2(\xi_3-\xi_4 \bar{\lambda}_\bG-\xi_5\bar{\lambda}_\bG^2)\bK}-\alpha(\xi_6-\xi_7\alpha)\|\bar{\bg}^k\|^2-\frac{\alpha}{8}\|\bar{\bg}_a^k\|^2<0,\label{tilde Vk+1-tilde Vk leq 0}
	\end{align}
    where 
    \begin{align*}
        D_1 =& \frac{\kappa_\bG}{\bar{\lambda}_\bG}+\frac{2\kappa_\bG^2}{\bar{\lambda}_\bG^2}+2+3\rho^2\bar{\lambda}_\bL^2+\theta \rho^2\bar{\lambda}_\bL^2\bar{\lambda}_\bG+\rho \bar{\lambda}_\bL\bar{\lambda}_\bG+\frac{1}{4}\theta^2\rho\bar{\lambda}_\bL\bar{\lambda}_\bG^2+\frac{2}{\alpha}+\bar{M}+\frac{21}{2}\bar{M}^2,\notag\\
        D_2 =& \beta^2(2+\frac{\kappa_\bG}{\bar{\lambda}_\bG}+\frac{2\kappa_\bG^2}{\bar{\lambda}_\bG^2}),
    \end{align*}
    and $\xi_1,\xi_2,\xi_3,\xi_4,\xi_5,\xi_6,\xi_7$ are given in \eqref{def parameter}.
\end{lemma}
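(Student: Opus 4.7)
The plan is to derive the descent inequality by tracking the one-step change of each of the four components of $V^k$ separately and then assembling the bound via careful Young's inequality splits. First I would switch to the compact equivalent form \eqref{xk+1 final}--\eqref{qk+1 final} and exploit the projection identities $\mathbf{K}+\mathbf{J}=\mathbf{I}_{Nd}$, $\mathbf{J}\mathbf{L}=\mathbf{L}\mathbf{J}=\mathbf{O}_{Nd}$, $\mathbf{K}\mathbf{L}=\mathbf{L}$, $\mathbf{Q}\mathbf{L}=\mathbf{K}$, together with the commutativity $\mathbf{G}\mathbf{L}=\mathbf{L}\mathbf{G}$ enforced by \eqref{def G}. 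Observing that $\mathbf{q}^k\in\mathcal{S}^\perp$ by induction from $\mathbf{q}^0=0$, the averaged dynamics collapses to $\bar{\mathbf{x}}^{k+1}=\bar{\mathbf{x}}^k+\mathbf{J}\mathbf{w}^k-\alpha\mathbf{J}\mathbf{g}^k$, which is the key ingredient for the objective-gap descent.

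Next I would handle the four pieces of $V^k$ in turn. For $\tfrac{1}{2}\|\mathbf{x}^{k+1}\|^2_\mathbf{K}$ I expand using \eqref{xk+1 final}; since $\mathbf{K}$ and $\mathbf{G}$ commute and $\mathbf{K}\mathbf{L}=\mathbf{L}$, the $\rho\mathbf{L}\mathbf{x}^k$ piece yields the dominant negative term $-\rho\bar{\lambda}_\mathbf{L}\underline{\lambda}_\mathbf{G}\|\mathbf{x}^k\|_\mathbf{K}^2$ that drives $\xi_1$, while the $\mathbf{g}^k$ contribution is split via $\nabla\tilde{f}(\mathbf{x}^k)=\mathbf{g}_a^k+(\mathbf{g}^k-\mathbf{g}_a^k)$ and bounded through $\bar{M}$-smoothness, producing the $\bar{M}^2\|\mathbf{x}^k\|_\mathbf{K}^2$ contributions to $\xi_1,\xi_2$. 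For $\tfrac{1}{2}\|\mathbf{s}^{k+1}\|^2_{(\theta\mathbf{G}+\mathbf{G}\mathbf{Q}/\rho)\mathbf{K}}$ I write $\mathbf{s}^{k+1}-\mathbf{s}^k=(\mathbf{q}^{k+1}-\mathbf{q}^k)+(\mathbf{g}_a^{k+1}-\mathbf{g}_a^k)$, substitute \eqref{qk+1 final}, and bound $\|\mathbf{g}_a^{k+1}-\mathbf{g}_a^k\|\leq\bar{M}\|\bar{\mathbf{x}}^{k+1}-\bar{\mathbf{x}}^k\|$ using the averaged update. The cross term $\langle\mathbf{x}^k,\tfrac{1}{2}\theta\mathbf{K}\mathbf{s}^k\rangle$ generates mixed products that, via the identity $\mathbf{Q}\mathbf{L}=\mathbf{K}$, yield a telescoping cancellation that transfers a portion of $\|\mathbf{s}^k\|^2_\mathbf{K}$ into the $-\rho\bar{\lambda}_\mathbf{L}\|\mathbf{x}^k\|_\mathbf{K}^2$ decrement. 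Finally, $f(\bar{x}^{k+1})-f(\bar{x}^k)$ is bounded via $\bar{M}$-smoothness of $f$ on the averaged update, producing the $-\alpha\|\bar{\mathbf{g}}_a^k\|^2$ and $-\alpha(\xi_6-\xi_7\alpha)\|\bar{\mathbf{g}}^k\|^2$ terms after splitting $\|\bar{\mathbf{g}}_a^k-\bar{\mathbf{g}}^k\|^2\leq\bar{M}^2\|\mathbf{x}^k\|_\mathbf{K}^2$.

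After summing the four bounds, I apply Young's inequality with the single shared parameter $\gamma$ on all cross products involving $\langle\mathbf{x}^k,\mathbf{s}^k\rangle$ and $\langle\bar{\mathbf{g}}^k,\bar{\mathbf{g}}_a^k\rangle$; all perturbation-with-signal cross terms are split with parameters chosen to keep the deterministic coefficients as sharp as written. Collecting coefficients produces exactly $\xi_1-\xi_2\bar{\lambda}_\mathbf{G}$ on $\|\mathbf{x}^k\|_\mathbf{K}^2$, $\xi_3-\xi_4\bar{\lambda}_\mathbf{G}-\xi_5\bar{\lambda}_\mathbf{G}^2$ on $\|\mathbf{s}^k\|_\mathbf{K}^2$, and $\xi_6-\xi_7\alpha$ on $\|\bar{\mathbf{g}}^k\|^2$, while every pure-noise remainder is absorbed into $D_1\|\mathbf{w}^k\|^2+D_2\|\mathbf{e}^k\|^2$; in particular the factor $\beta^2$ in $D_2$ arises because $\mathbf{e}^k$ enters only through $\beta\mathbf{L}\mathbf{e}^k$ and $\|\mathbf{L}\mathbf{e}^k\|\leq\bar{\lambda}_\mathbf{L}\|\mathbf{e}^k\|$. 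The parameter restrictions \eqref{range c alpha gamma}--\eqref{range alpha} are then invoked: \eqref{range rho} with threshold $\xi_8$ forces $\xi_1>0$ and with threshold $\xi_9$ forces $\xi_3>0$; \eqref{range bar lambda G} keeps $\bar{\lambda}_\mathbf{G}$ below $\xi_1/\xi_2$ and below the positive root of $\xi_3-\xi_4 t-\xi_5 t^2=0$; and \eqref{range alpha} keeps $\alpha$ below $\xi_6/\xi_7$, yielding the strict negativity of the right-hand side.

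The main obstacle I expect is the cross term $\langle\mathbf{x}^k,\tfrac{1}{2}\theta\mathbf{K}\mathbf{s}^k\rangle$ in $V^k$: its evolution generates three sign-indefinite mixed products, and they must be split using the \emph{same} Young parameter $\gamma$ that appears in $\xi_3$, because only this coupled choice simultaneously (i) removes the otherwise indefinite $\rho\mathbf{L}\mathbf{x}^k$ cross term into the $\rho\bar{\lambda}_\mathbf{L}\|\mathbf{x}^k\|_\mathbf{K}^2$ decrement, (ii) produces the $-\tfrac{1}{\rho^2\underline{\lambda}_\mathbf{L}^2}$ contribution to $\xi_3$, and (iii) leaves the quadratic-in-$\bar{\lambda}_\mathbf{G}$ corrections in the precise $\xi_4,\xi_5$ template. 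A closely related bookkeeping difficulty is ensuring the noise-coefficient consolidation: every $\mathbf{w}^k$ flowing through $\mathbf{x}^{k+1}$, through $\rho\mathbf{L}\mathbf{w}^k$ in $\mathbf{q}^{k+1}$, and through the smoothness-generated $\mathbf{J}\mathbf{w}^k$ in $\mathbf{g}_a^{k+1}$ must be collected with the exact mix of $\kappa_\mathbf{G}/\bar{\lambda}_\mathbf{G}$, $\kappa_\mathbf{G}^2/\bar{\lambda}_\mathbf{G}^2$, $\rho^2\bar{\lambda}_\mathbf{L}^2$, and $\bar{M}^2$ factors listed in $D_1$, which requires carefully tracking every Young split at unit scale so that no spurious factor of $\bar{\lambda}_\mathbf{G}$ appears or is lost.
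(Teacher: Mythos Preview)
Your plan matches the paper's proof: the same four-term decomposition of $V^{k+1}$, the same splitting $\mathbf{g}^k=\mathbf{g}_a^k+(\mathbf{g}^k-\mathbf{g}_a^k)$ with $\bar{M}$-smoothness, the same averaged dynamics $\bar{\mathbf{x}}^{k+1}-\bar{\mathbf{x}}^k=-\alpha\bar{\mathbf{g}}^k-\mathbf{J}\mathbf{w}^k$, and the same final verification that \eqref{range c alpha gamma}--\eqref{range alpha} force $\xi_1-\xi_2\bar{\lambda}_\mathbf{G}>0$, $\xi_3-\xi_4\bar{\lambda}_\mathbf{G}-\xi_5\bar{\lambda}_\mathbf{G}^2>0$, $\xi_6-\xi_7\alpha>0$. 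One tactical point to adjust: in the paper the free Young parameter $\gamma$ is used \emph{only} in the analysis of $\tfrac{1}{2}\|\mathbf{x}^{k+1}\|_\mathbf{K}^2$ (to split the $\langle\mathbf{x}^k,\mathbf{s}^k\rangle$ and $\langle\mathbf{g}^k-\mathbf{g}_a^k,\mathbf{s}^k\rangle$ cross terms arising there, producing the $\tfrac{5\gamma}{2}$ in $\xi_3$), whereas the cross-term block $\langle\mathbf{x}^{k+1},\theta\mathbf{K}\mathbf{s}^{k+1}\rangle$ and the $f(\bar{x}^{k+1})$ block are handled with fixed unit-scale splits---so the ``same $\gamma$ everywhere'' plan should be localized accordingly.
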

\begin{proof}
    See Appendix~\ref{proof: lemma 1}.
\end{proof}

Next, we present an important result that the sequence $\{D_1\|\bw^k\|^2+D_2\|\be^k\|^2\}$ is summable in expectation.
\begin{lemma}\label{lemma: summable error}
    Suppose the Laplace noises $\be^k$ and $\bw^k$ are independently generated such that: $e_i^k\sim \operatorname{Lap}(\theta_{e,i}^k)$ and $w_i^k\sim \operatorname{Lap}(\theta_{w,i}^k)$, where $\theta_{e,i}^k=r_i^k u_{e,i}$ and $\theta_{w,i}^k=r_i^k u_{w,i}$ with $u_{e,i},u_{w,i}>0$ and $r_i\in(0,1)$. Then,
    \begin{equation}\label{D1w+D2e lemma}
        \mathbb{E}\left[\sum_{k=0}^K(D_1\|\bw^k\|^2+D_2\|\be^k\|^2)\right] \leq (D_1+D_2)\frac{2N\bar{u}^2}{1-\bar{r}^2},
    \end{equation}
    where $\bar{u} \!=\! \max_{i=1,\dots,N}\{u_{e,i},u_{w,i}\}$ and $\bar{r}\!=\!\max_{i=1,\dots,N}\{r_i\}$.
\end{lemma}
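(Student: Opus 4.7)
The plan is to bound the expected square norms $\mathbb{E}\|\bw^k\|^2$ and $\mathbb{E}\|\be^k\|^2$ using the second-moment formula for the Laplace distribution, then sum the resulting geometric series in $k$.

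First I would recall that for a Laplace random variable $X\sim\operatorname{Lap}(\theta)$ with density $f_L(x,\theta)=\frac{1}{2\theta}e^{-|x|/\theta}$, one has $\mathbb{E}[X]=0$ and $\mathbb{E}[X^2]=2\theta^2$. Applied component-wise (and using that the components of $w_i^k$ and $e_i^k$ are drawn independently from their respective Laplace distributions), this gives
\begin{equation*}
    \mathbb{E}\|w_i^k\|^2 \le 2(\theta_{w,i}^k)^2 = 2 r_i^{2k} u_{w,i}^2, \qquad \mathbb{E}\|e_i^k\|^2 \le 2(\theta_{e,i}^k)^2 = 2 r_i^{2k} u_{e,i}^2,
\end{equation*}
(absorbing any dimension factor into the definition of $\bar u$ as needed).

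Next, since $\bw^k$ stacks $w_1^k,\dots,w_N^k$ and the noises across agents are independent, I sum over $i$ and bound each term uniformly by $\bar u^2$ and $\bar r^{2k}$ using $\bar u=\max_i\{u_{e,i},u_{w,i}\}$ and $\bar r=\max_i r_i$:
\begin{equation*}
    \mathbb{E}\|\bw^k\|^2 = \sum_{i=1}^N \mathbb{E}\|w_i^k\|^2 \le 2N\bar u^2 \bar r^{2k},\qquad \mathbb{E}\|\be^k\|^2 \le 2N\bar u^2 \bar r^{2k}.
\end{equation*}

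Finally, I would swap expectation with the finite sum over $k$, substitute the two bounds, factor the common constant, and evaluate the geometric series:
\begin{equation*}
    \mathbb{E}\left[\sum_{k=0}^K (D_1\|\bw^k\|^2+D_2\|\be^k\|^2)\right] \le (D_1+D_2)\cdot 2N\bar u^2 \sum_{k=0}^K \bar r^{2k} \le (D_1+D_2)\frac{2N\bar u^2}{1-\bar r^2},
\end{equation*}
where the last inequality uses $\bar r\in(0,1)$ to extend the partial sum to the full geometric series $\sum_{k=0}^\infty \bar r^{2k}=1/(1-\bar r^2)$. This is the advertised bound and is uniform in $K$, which is the key feature needed when combining with Lemma~\ref{Lemma vk+1 - vk} in the subsequent convergence analysis. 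There is no substantive obstacle here; the proof is a direct second-moment calculation, and the only point requiring care is the convention used for the multi-dimensional Laplace noise and the independence across coordinates, agents, and iterations, which is what permits the summation-before-bound step.
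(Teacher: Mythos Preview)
Your proposal is correct and follows essentially the same route as the paper: compute the Laplace second moment $\mathbb{E}[X^2]=2\theta^2$, bound $\mathbb{E}\|\bw^k\|^2$ and $\mathbb{E}\|\be^k\|^2$ by $2N\bar u^2\bar r^{2k}$, and sum the geometric series. The paper's proof is terser but identical in substance; your remark about the dimension convention is well taken, as the paper also suppresses any $d$-dependence in the final bound.
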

\begin{proof}
    See Appendix~\ref{proof of lemma summable error}.
\end{proof}

Based on Lemma~\ref{Lemma vk+1 - vk} and Lemma~\ref{lemma: summable error}, we now analyze the convergence rate of $\text{DPP}^2$ with respect to the optimality gap: 
\begin{equation}\label{def hatWk}
\hat{W}^k = \|\mb{x}^{k}-\bar{\bx}^k\|^2+\frac{1}{N}\|\sum_{i=1}^N \nabla {f}_i(x_i^k)\|^2,
\end{equation} 
where $\bar{\bx}^k$ is defined in \eqref{def variable}. The optimality gap comprises the consensus error and the stationarity error. Based on this measure, we now establish the following theorem.

\begin{theorem}\label{theorem:nonconvex}
	Suppose Assumptions~\ref{assumption smooth}, \ref{assumption optimum} and \ref{assumption GL} hold. Let $\{\mb{x}^k\}$ be the sequence generated by \eqref{yk new}--\eqref{qk+1 new}  with the parameters selected by \eqref{range c alpha gamma}--\eqref{range alpha}. With the initialization $\bq^0=\bd^0=0$, for any $K\in\mathbb{N}$ and $k\in[0,K]$,
    \begin{equation*}
        \frac{\sum _{k=0}^{K}\mathbb{E}[\hat{W}^k]}{K+1}\leq \frac{1}{\zeta_5(K+1)}(\zeta_4\hat{V}^0+\frac{2(D_1+D_2)N\bar{u}^2}{1-\bar{r}^2}),
    \end{equation*}
    where 
    \begin{align}
    \zeta_1 =& 1-c_1+\sqrt{(c_1-1)^2+\theta^2}  \text{ with } c_1=\frac{\bar{\lambda}_\bG}{\kappa_\bG}(\theta +\frac{1}{\rho\bar{\lambda}_\bL}),\notag\\
    \zeta_2 =& 1-c_2+\sqrt{(c_2-1)^2+\theta^2}  \text{ with } c_2=\bar{\lambda}_\bG(\theta +\frac{1}{\rho\underline{\lambda}_\bL}),\notag\\
    \zeta_3 =& \frac{1}{2}-\frac{\zeta_1}{4},\notag\\
    \zeta_4 =& \max\{\frac{1}{2}+\frac{\zeta_2}{4},1\},\notag\\
    \zeta_5 =& \min\{\bar{\lambda}_\bG(\xi_1-\xi_2\bar{\lambda}_\bG),\alpha(\xi_6-\xi_7\alpha)\},\notag\\
    \hat{V}_0=&\|\bx^0-\bar{\bx}^0\|^2+\frac{1}{N}\|\sum_{i=1}^N \nabla {f}_i(x_i^0)\|^2+f(\bar{x}^0)-f^*.\notag
    \end{align}
    The parameters \(\zeta_3\), \(\zeta_4\), and \(\hat{V}^0\) are defined in Theorem~\ref{theorem:nonconvex}; \(D_1\) and \(D_2\) are given in Lemma~\ref{Lemma vk+1 - vk}; \(\bar{u}\) and \(\bar{r}\) are defined in Lemma~\ref{lemma: summable error}; and \(\xi_1\), \(\xi_2\), \(\xi_3\), \(\xi_4\), and \(\xi_5\) are given in \eqref{def parameter}.
\end{theorem}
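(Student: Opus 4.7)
The plan is to telescope the one-step descent in Lemma~\ref{Lemma vk+1 - vk}, take expectations, invoke Lemma~\ref{lemma: summable error} to absorb the noise terms, and then extract $\sum_{k=0}^{K}\mathbb{E}[\hat{W}^k]$ on the right via a nonnegativity bound for $V^{K+1}$ and an upper bound for $V^0$.

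First I would discard the two unused negative terms $-\bar{\lambda}_\bG^2(\xi_3-\xi_4\bar{\lambda}_\bG-\xi_5\bar{\lambda}_\bG^2)\|\bs^k\|^2_\bK$ and $-\tfrac{\alpha}{8}\|\bar{\bg}_a^k\|^2$ on the right-hand side of \eqref{tilde Vk+1-tilde Vk leq 0}. Since $\bK=\bI_{Nd}-\bJ$ is the orthogonal projection onto $\mathcal{S}^\perp$, one has $\|\bx^k\|^2_\bK=\|\bx^k-\bar{\bx}^k\|^2$; and since each $d$-block of $\bar{\bg}^k=\bJ\nabla\tilde{f}(\bx^k)$ equals $\tfrac{1}{N}\sum_{i=1}^N\nabla f_i(x_i^k)$, one has $\|\bar{\bg}^k\|^2=\tfrac{1}{N}\|\sum_{i=1}^N\nabla f_i(x_i^k)\|^2$. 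Their sum is exactly $\hat{W}^k$. Setting $\zeta_5=\min\{\bar{\lambda}_\bG(\xi_1-\xi_2\bar{\lambda}_\bG),\,\alpha(\xi_6-\xi_7\alpha)\}>0$ (positivity follows from \eqref{range bar lambda G} and \eqref{range alpha}), the descent collapses to
\[
V^{k+1}-V^k \leq D_1\|\bw^k\|^2+D_2\|\be^k\|^2-\zeta_5\hat{W}^k.
\]
Telescoping over $k=0,\dots,K$, taking expectations, and invoking Lemma~\ref{lemma: summable error} then gives
\[
\zeta_5\sum_{k=0}^{K}\mathbb{E}[\hat{W}^k] \leq V^0-\mathbb{E}[V^{K+1}]+(D_1+D_2)\tfrac{2N\bar{u}^2}{1-\bar{r}^2}.
\]

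Next I would show $\mathbb{E}[V^{K+1}]\geq 0$. The term $f(\bar{x}^{K+1})-f^*\geq 0$ by Assumption~\ref{assumption optimum}, so it remains to verify that the quadratic part of $V^k$ in $(\bx^k,\bs^k)$ is positive semidefinite. Since $\bG$, $\bL$, $\bQ=\bL^\dagger$, and $\bK$ pairwise commute and share eigenvectors, I would diagonalize the quadratic form into independent $2\times 2$ blocks, one per nonzero eigenvalue pair $(\lambda_\bG,\lambda_\bL)$: each block (after factoring out $\tfrac{1}{2}$) has diagonal entries $1$ and $\mu:=\lambda_\bG(\theta+1/(\rho\lambda_\bL))$ and off-diagonal $\theta/2$, with smaller eigenvalue $(1+\mu-\sqrt{(1-\mu)^2+\theta^2})/4$. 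This quantity is monotone increasing in $\mu$, hence minimized when $\mu=c_1=(\bar{\lambda}_\bG/\kappa_\bG)(\theta+1/(\rho\bar{\lambda}_\bL))$, yielding the global lower bound $\zeta_3=\tfrac{1}{2}-\tfrac{\zeta_1}{4}$. The parameter restrictions in \eqref{range c alpha gamma}--\eqref{range alpha} force $\zeta_3\geq 0$, so the quadratic part is PSD and $V^{K+1}\geq 0$.

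Finally, I would upper-bound $V^0$ using $\bq^0=\bd^0=0$, which forces $\bs^0=\bg_a^0=\mathbf{1}_N\otimes\nabla f(\bar{x}^0)\in\mathcal{S}=\operatorname{Null}(\bK)$. Consequently $\bK\bs^0=0$, so both $\tfrac{1}{2}\|\bs^0\|^2_{(\theta\bG+\bG\bQ/\rho)\bK}$ and the cross term $\langle\bx^0,\tfrac{\theta}{2}\bK\bs^0\rangle$ vanish, leaving $V^0=\tfrac{1}{2}\|\bx^0-\bar{\bx}^0\|^2+f(\bar{x}^0)-f^*\leq \hat{V}^0\leq \zeta_4\hat{V}^0$ since $\zeta_4\geq 1$ by construction. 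Assembling the three bounds and dividing by $\zeta_5(K+1)$ produces the stated rate. The main obstacle I anticipate is the PSD argument for the quadratic part of $V^{K+1}$: the cross term $\langle\bx^k,\tfrac{\theta}{2}\bK\bs^k\rangle$ is sign-indefinite and can be large, so one must exploit simultaneous diagonalizability of $\bG,\bL,\bQ,\bK$ to reduce to a family of $2\times 2$ matrices, correctly track the minimum over $(\lambda_\bG,\lambda_\bL)$, and verify that the standing parameter ranges indeed enforce $\zeta_1\leq 2$; the rest is routine telescoping and bookkeeping.
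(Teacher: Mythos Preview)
Your telescoping, the expectation/Lemma~\ref{lemma: summable error} step, and your lower bound $V^{K+1}\ge 0$ via simultaneous diagonalization are all fine and essentially coincide with the paper's argument (the paper uses a Young-inequality version of the same $2\times2$ eigenvalue computation to get $V^k\ge\zeta_3\hat V^k\ge 0$).

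The genuine gap is in your treatment of $V^0$. You write $\bs^0=\bg_a^0=\mathbf{1}_N\otimes\nabla f(\bar x^0)\in\mathcal S$, but this conflates $\bg_a^0$ with $\bar\bg_a^0$. By definition $\bg_a^k=\nabla\tilde f(\bar\bx^k)$, whose $i$-th $d$-block is $\nabla f_i(\bar x^k)$; these blocks are \emph{not} equal across $i$ in general, so $\bg_a^0\notin\mathcal S$ and $\bK\bs^0\neq 0$. Consequently neither $\tfrac12\|\bs^0\|^2_{(\theta\bG+\bG\bQ/\rho)\bK}$ nor the cross term $\langle\bx^0,\tfrac{\theta}{2}\bK\bs^0\rangle$ vanishes, and your claimed identity $V^0=\tfrac12\|\bx^0-\bar\bx^0\|^2+f(\bar x^0)-f^*$ fails. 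The correct route---which is exactly what the paper does---is to run your diagonalization argument in the other direction: the same $2\times2$ blocks have \emph{largest} eigenvalue attained at $\mu=c_2=\bar\lambda_\bG(\theta+1/(\rho\underline\lambda_\bL))$, giving $V^0\le(\tfrac12+\tfrac{\zeta_2}{4})(\|\bx^0\|^2_\bK+\|\bs^0\|^2_\bK)+f(\bar x^0)-f^*\le\zeta_4\hat V^0$, where now $\|\bs^0\|^2_\bK=\|\bg_a^0\|^2_\bK$ because $\bq^0=0$. This is why $\zeta_2$ and $\zeta_4$ appear in the statement at all; in your version they would be superfluous.
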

\begin{proof}
    See Appendix~\ref{proof DPP2 convergence}.
\end{proof}

Theorem~\ref{theorem:nonconvex} indicates that the running average of the optimality gap dissipates and, thus, $\text{DPP}^2$ converges to a stationary solution at a sublinear rate of $\mathcal{O}(1/K)$. The sublinear rate is related to the initialization of the Laplace noise $\bar{u}$ and its decay rate $\bar{r}$, which will be verified by the numerical example in Section~\ref{section:simulation}. Moreover, the rate is of the same order as \cite{hong2017prox,sun2018distributed,sun2019distributed,yi2022sublinear,yi2021linear} for solving smooth nonconvex problems and is better than those in \cite{zhu2024privsgp,DIFF2,wang2023decentralized,Huang2015Diff}. 

\subsection{Global Optimum Guarantee}
Now we provide the convergence analysis of $\text{DPP}^2$ for achieving global optimum under the following condition.
\begin{assumption}\label{assumption:PL}
    The global objective function $f(x)$ satisfies the P-{\L} condition with constant $\nu>0$, i.e.,
	\begin{equation}
		\|\nabla f(x)\|^2 \geq 2\nu (f(x) - f^*), \quad \forall x \in \Re ^d. \label{def pl}
	\end{equation}
\end{assumption}
Note that the P-{\L} condition is milder than the commonly adopted strong convexity \cite{Ding2022TAC,DingTie2018CDC,Huang2015Diff,Yuan2024DistributedNG}. We next present the convergence result of $\text{DPP}^2$ under the P-{\L} condition.

\begin{theorem}\label{theorem:PL}
    Suppose Assumptions~\ref{assumption smooth}, \ref{assumption optimum}, \ref{assumption GL} and \ref{assumption:PL} hold. Let $\{\mb{x}^k\}$ be the sequence generated by \eqref{yk new}--\eqref{qk+1 new} with the parameters selected by \eqref{range c alpha gamma}--\eqref{range alpha}. With the initialization $\bq^0=\bd^0=0$, for any $k\geq 0$,
    \begin{align*}
        &\mathbb{E}[\|\bx^k-\bar{\bx}^k\|^2+f(\bar{x}^k)-f^*]\leq (1-\zeta)^{k}\frac{1}{\zeta_3}\Big(\zeta_4\hat{V}^0+\frac{2(D_1+D_2)N\bar{u}^2}{1-\zeta-\bar{r}^2}\Big),
    \end{align*}
    where 
    \begin{align}
        \label{range zeta 6}\zeta_6 =& \min\{\bar{\lambda}_\bG(\xi_1-\xi_2\bar{\lambda}_\bG),\bar{\lambda}_\bG^2(\xi_3-\xi_4\bar{\lambda}_\bG-\xi_5\bar{\lambda}_\bG^2),\frac{\alpha\nu N}{4},\zeta_4(1-\bar{r}^2)\},\\
        \label{range zeta} 0<& \zeta =\zeta_6/\zeta_4<1.
    \end{align}
    The parameters $\zeta_3,\zeta_4,\hat{V}^0$ are defined in Theorem~\ref{theorem:nonconvex}; $D_1,D_2$ are given in Lemma~\ref{Lemma vk+1 - vk}; $\bar{u},\bar{r}$ are defined in Lemma~\ref{lemma: summable error}; and $\xi_1,\xi_2,\xi_3,\xi_4,\xi_5$ are given in \eqref{def parameter}.
\end{theorem}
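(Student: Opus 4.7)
The plan is to combine the per-step descent of Lemma~\ref{Lemma vk+1 - vk} with the P-{\L} condition to exhibit a linear contraction of the Lyapunov function $V^k$, and then convert that contraction into the claimed linear rate on $\|\bx^k-\bar{\bx}^k\|^2+f(\bar{x}^k)-f^*$ via a two-sided equivalence $\zeta_3\hat{V}^k\le V^k\le \zeta_4\hat{V}^k$ of the type already developed inside the proof of Theorem~\ref{theorem:nonconvex}.

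\textbf{Step 1 (Activate P-{\L}).} Since $\bar{\bg}_a^k=\mb{1}_N\otimes\nabla f(\bar{x}^k)$, we have $\|\bar{\bg}_a^k\|^2=N\|\nabla f(\bar{x}^k)\|^2$, so Assumption~\ref{assumption:PL} gives $\tfrac{\alpha}{8}\|\bar{\bg}_a^k\|^2\ge \tfrac{\alpha\nu N}{4}(f(\bar{x}^k)-f^*)$. Substituting this into \eqref{tilde Vk+1-tilde Vk leq 0} trades the gradient term at $\bar{x}^k$ for a term proportional to the sub-optimality in the objective, so that the aggregate decrease in $V^k$ now controls all three pieces of $\hat{V}^k=\|\bx^k-\bar{\bx}^k\|^2+\tfrac{1}{N}\|\sum_i\nabla f_i(x_i^k)\|^2+f(\bar{x}^k)-f^*$.

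\textbf{Step 2 (Linear contraction).} From the definition \eqref{def V}, $\tfrac{1}{2}\|\bx^k\|_\bK^2=\tfrac{1}{2}\|\bx^k-\bar{\bx}^k\|^2$, and Young's inequality on the cross term $\langle\bx^k,\tfrac{1}{2}\theta\bK\bs^k\rangle$ combined with the spectral constants $\zeta_1,\zeta_2$ yields $\zeta_3\hat{V}^k\le V^k\le \zeta_4\hat{V}^k$, where the gradient component of $\hat{V}^k$ is controlled by $\|\bs^k\|^2_\bK$ through the first-order condition \eqref{first-order optimality condition}. By construction of $\zeta_6$ in \eqref{range zeta 6}, each of the three negative terms in \eqref{tilde Vk+1-tilde Vk leq 0} after Step 1, namely $\bar{\lambda}_\bG(\xi_1-\xi_2\bar{\lambda}_\bG)\|\bx^k-\bar{\bx}^k\|^2$, $\bar{\lambda}_\bG^2(\xi_3-\xi_4\bar{\lambda}_\bG-\xi_5\bar{\lambda}_\bG^2)\|\bs^k\|_\bK^2$, and $\tfrac{\alpha\nu N}{4}(f(\bar{x}^k)-f^*)$, dominates $\zeta_6$ times the matching component of $\hat{V}^k$. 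Summing and using $\zeta_6\hat{V}^k\ge(\zeta_6/\zeta_4)V^k=\zeta V^k$ yields
\[
\mathbb{E}[V^{k+1}]\le (1-\zeta)\,\mathbb{E}[V^k]+D_1\mathbb{E}\|\bw^k\|^2+D_2\mathbb{E}\|\be^k\|^2.
\]

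\textbf{Step 3 (Unroll with geometric noise).} The Laplace variances give $\mathbb{E}\|\bw^k\|^2\le 2N\bar{u}^2\bar{r}^{2k}$ and $\mathbb{E}\|\be^k\|^2\le 2N\bar{u}^2\bar{r}^{2k}$, in line with Lemma~\ref{lemma: summable error}. The fourth entry $\zeta_4(1-\bar{r}^2)$ in the minimum defining $\zeta_6$ guarantees $\bar{r}^2\le 1-\zeta$, so the geometric convolution evaluates to $\sum_{j=0}^{k-1}(1-\zeta)^{k-1-j}\bar{r}^{2j}\le (1-\zeta)^k/(1-\zeta-\bar{r}^2)$. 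Unrolling produces $\mathbb{E}[V^k]\le (1-\zeta)^k\bigl(V^0+\tfrac{2(D_1+D_2)N\bar{u}^2}{1-\zeta-\bar{r}^2}\bigr)$. The lower bound $V^k\ge \zeta_3\bigl(\|\bx^k-\bar{\bx}^k\|^2+f(\bar{x}^k)-f^*\bigr)$ (obtained by dropping the nonnegative gradient part of $\hat{V}^k$ on the right of the two-sided estimate) and $V^0\le \zeta_4\hat{V}^0$ deliver the stated inequality.

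\textbf{Main obstacle.} The delicate step is Step 2: all three decrease coefficients must be simultaneously dominated by $\zeta_6$ times the corresponding pieces of $\hat{V}^k$, which is precisely what forces the fourfold minimum in \eqref{range zeta 6}. Doing so requires carefully handling the indefinite cross term $\langle\bx^k,\tfrac{1}{2}\theta\bK\bs^k\rangle$ inside $V^k$ via Young's inequality and then re-expressing $\|\bs^k\|_\bK^2$ so that it dominates $\tfrac{1}{N}\|\sum_i\nabla f_i(x_i^k)\|^2$ up to constants controllable by $\rho$ through \eqref{first-order optimality condition}; this is the only place where the strict inequalities on $\bar{c}_\theta,\gamma,\bar{\lambda}_\bG$ in \eqref{range c alpha gamma}--\eqref{range alpha} are indispensable for the contraction factor $\zeta\in(0,1)$ to exist.
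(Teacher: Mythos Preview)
Your overall architecture matches the paper's proof: use Lemma~\ref{Lemma vk+1 - vk}, activate P-{\L} to trade $\tfrac{\alpha}{8}\|\bar{\bg}_a^k\|^2$ for $\tfrac{\alpha\nu N}{4}(f(\bar{x}^k)-f^*)$, absorb the three negative pieces into $-\zeta_6\hat{V}^k\le-\zeta V^k$ via the sandwich $\zeta_3\hat{V}^k\le V^k\le \zeta_4\hat{V}^k$, then unroll with the geometric noise bound. Steps~1 and~3 are fine.

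The gap is in Step~2, and it comes from a wrong identification of $\hat{V}^k$. In the paper (see \eqref{def hat V} in the proof of Theorem~\ref{theorem:nonconvex}) the intermediate Lyapunov is
\[
\hat{V}^k \;=\; \|\bx^k\|_\bK^2 \;+\; \|\bs^k\|_\bK^2 \;+\; f(\bar{x}^k)-f^*,
\]
\emph{not} $\|\bx^k-\bar{\bx}^k\|^2+\tfrac{1}{N}\|\sum_i\nabla f_i(x_i^k)\|^2+f(\bar{x}^k)-f^*$ as you write. With the correct $\hat{V}^k$, the three negative terms you list in Step~2 match the three summands of $\hat{V}^k$ \emph{term by term}, so the definition of $\zeta_6$ in \eqref{range zeta 6} immediately gives the bound $-\zeta_6\hat{V}^k$; no conversion between $\|\bs^k\|_\bK^2$ and $\|\bar{\bg}^k\|^2$ is required. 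Likewise, the sandwich $\zeta_3\hat{V}^k\le V^k\le \zeta_4\hat{V}^k$ (equations \eqref{Vk geq hat Vk}--\eqref{Vk leq hat Vk}) comes purely from Young's inequality on the cross term and the spectral bounds on $\theta\bG+\bG\bQ/\rho$; the first-order condition \eqref{first-order optimality condition} plays no role there.

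Your proposed route through \eqref{first-order optimality condition} does not work: projecting that identity by $\bJ$ and using $\bJ\bL=\bJ\bL^{1/2}=\mathbf{O}_{Nd}$ yields $\bar{\bg}^k=-\bJ\bB(\bx^{k+1}-\bx^k)$, with no $\bs^k$ appearing at all, so one cannot extract a clean bound $\|\bar{\bg}^k\|^2\lesssim\|\bs^k\|_\bK^2$ this way. Once you replace your $\hat{V}^k$ by the paper's version, the ``main obstacle'' you flag disappears and the contraction in Step~2 is a one-line consequence of the $\min$ in \eqref{range zeta 6}; the rest of your proof then coincides with the paper's.
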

\begin{proof}
    See Appendix~\ref{proof theorem PL}.
\end{proof}

Theorem~\ref{theorem:PL} reveals that there exists a constant $\theta\in(0,1)$ such that $\mathbb{E}[\|\bx^k-\bar{\bx}^k\|^2+f(\bar{x}^k)-f^*]=\mathcal{O}(\theta^k)$. This indicates that, as $k\rightarrow \infty$, $x_i^k \,\,\forall i$ reach consensus and $f(\bar{x}^k)$ converges to $f^*$ (the optimal value of the global objective function $f(x)$ defined in Assumption~\ref{assumption optimum}). Hence, $x_i^k \,\,\forall i$ enjoy a linear rate of convergence to the unique global optimum under the P-{\L} condition. This result improves the linear convergence to suboptimality, as is stated in \cite{XIE2025112338} under the P-{\L} condition and in \cite{DingTie2018CDC,Ding2022TAC} under strong convexity. 
Similar to the results in Theorem~\ref{theorem:nonconvex}, the linear rate is governed by the initialization and decay rate of the Laplace noise.

\begin{remark}
    Theorem~\ref{theorem:nonconvex} and Theorem~\ref{theorem:PL} guarantee exact convergence under nonconvex settings, which is superior to the suboptimality guarantees provided by existing methods \cite{zhu2024privsgp,DIFF2,Huang2015Diff,Ding2022TAC,xie_compressed_2023,XIE2025112338,huang_differential_2024}. Moreover, as is stated in Table~\ref{tab:my_label}, under general nonconvex settings, $\text{DPP}^2$ achieves a sublinear convergence rate of $\mathcal{O}(1/K)$, which outperforms both the asymptotic convergence of the method in \cite{wang2023decentralized,wang2024robust,Huang2015Diff} and the $\mathcal{O}(1/\sqrt{K})$ rate of the methods \cite{zhu2024privsgp,DIFF2}. Under the additional P-{\L} condition, $\text{DPP}^2$ further attains a linear convergence rate—while relaxing the strong convexity required by the methods in  \cite{Huang2015Diff,Ding2022TAC,Yuan2024DistributedNG}.
\end{remark}
\section{Differential Privacy}\label{section:privacy}
In this section, we show that the proposed $\text{DPP}^2$ preserves the differential privacy of all the local objective functions. 

Given any objective function $f_{i_0}\in\{f_1,\dots,f_N\}$, note that for any two $\delta$-adjacent function sets $F^{(1)}=\{f_i^{(1)}\}_{i=1}^N$ and $F^{(2)}=\{f_i^{(2)}\}_{i=1}^N$, defined in Definition~\ref{def Adjacency}, the objective functions $f^{(1)}$ and $f^{(2)}$ differ only in $f_{i_0}$, i.e., $f^{(1)}(x)=\sum_{i\neq i_0}f_i+f^{(1)}_{i_0}$ and $f^{(2)}(x)=\sum_{i\neq i_0}f_i+f^{(2)}_{i_0}$. As $\epsilon$-DP measures the indistinguishability of an algorithm’s output when the algorithm is run on two adjacent function sets ($F^{(1)}$ and $F^{(2)}$), we analysis the differential privacy guarantee of $\text{DPP}^2$ in the following theorem.

\begin{theorem}\label{theorem:dp}
	Suppose Assumptions~\ref{assumption smooth}, \ref{assumption optimum} and \ref{assumption GL} hold. Given a time horizon $K>0$ and privacy level $\epsilon_{i_0}>0,i_0\in\mathcal{V}$, $\text{DPP}^2$ preserves the $\epsilon_{i_0}$-differential privacy for any node $i_0$'s objective function if 
    $$\sum_{k=1}^K \sqrt{d}\Big(\frac{1}{\alpha u_{e,{i_0}}}+\frac{1}{u_{w,{i_0}}}\Big)\frac{\alpha \delta}{r_{i_0}^k(1-\alpha \bar{M})}\leq\epsilon_{i_0},$$
    where $u_{e,{i_0}},u_{w,{i_0}}$ and $r_{i_0}$ are defined in Section~\ref{subsection second-tier}.
\end{theorem}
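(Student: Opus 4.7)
The plan is a change-of-variables argument for the Laplace mechanism: fix any observation $\mathcal{O} = \{y_i^k, z_i^k\}_{i\in\mathcal{V},\,k}$ and compute, under each of the two runs of $\text{DPP}^2$ driven by the $\delta$-adjacent function sets $F^{(1)}$ and $F^{(2)}$ from the \emph{same} initial state, the (unique) noise sequence $\{w_i^k, e_i^k\}$ that would produce $\mathcal{O}$. Because Lines 6 and 7 of Algorithm~\ref{algrithm DPP2} can be solved explicitly for $w_i^k$ and $e_i^k$ given the observations and the private state, the density ratio, which is the $\epsilon$-DP quantity by Definition~\ref{def DP}, reduces to a product of Laplace density ratios controlled by the $\ell_1$-differences of these required noise realizations across the two runs.

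First, I would observe that the dual updates in Lines 9 and 10 depend only on the fixed observations $\{y_j^k\}$ and the shared initialization $d_i^0 = q_i^0 = \mathbf{0}$. A one-line induction therefore gives $d_i^{k,(1)} = d_i^{k,(2)}$ and $q_i^{k,(1)} = q_i^{k,(2)}$ for \emph{every} node $i$ and every iteration $k$. Next, for each non-sensitive node $j \neq i_0$, matching the observations $y_j^k, z_j^k$ across the two runs forces
\begin{equation*}
\Delta w_j^k = -\Delta x_j^k, \qquad \Delta e_j^k = -\bigl(\nabla f_j(x_j^{k,(1)}) - \nabla f_j(x_j^{k,(2)})\bigr),
\end{equation*}
and substituting these into Line 8 while cancelling the common aggregation $\beta\sum p_{jm} z_m^k$ yields $\Delta x_j^{k+1} = \alpha \Delta e_j^k$. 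Assumption~\ref{assumption smooth} then gives the contraction $\|\Delta x_j^{k+1}\| \le \alpha \bar M \|\Delta x_j^k\|$; since the choice \eqref{range alpha} implies $\alpha \bar M < 1$ (indeed $\alpha < \xi_6/\xi_7 < 1/\bar M$), a second induction from $\Delta x_j^0 = \mathbf{0}$ forces $\Delta x_j^k = \Delta w_j^k = \Delta e_j^k = \mathbf{0}$. Hence only the noises injected at node $i_0$ can differ across the two runs.

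For node $i_0$, the identical bookkeeping yields $\Delta w_{i_0}^k = -\Delta x_{i_0}^k$, $\Delta e_{i_0}^k = -\bigl(\nabla f_{i_0}^{(1)}(x_{i_0}^{k,(1)}) - \nabla f_{i_0}^{(2)}(x_{i_0}^{k,(2)})\bigr)$, and $\Delta x_{i_0}^{k+1} = \alpha \Delta e_{i_0}^k$. Splitting the gradient difference by adding and subtracting $\nabla f_{i_0}^{(1)}(x_{i_0}^{k,(2)})$, bounding the first piece by Assumption~\ref{assumption smooth} and the second by Definition~\ref{def Adjacency}, produces the affine recursion
\begin{equation*}
\|\Delta x_{i_0}^{k+1}\| \le \alpha \bar M \|\Delta x_{i_0}^k\| + \alpha \delta, \qquad \Delta x_{i_0}^0 = \mathbf{0}.
\end{equation*}
Summing the resulting geometric series delivers the uniform bounds $\|\Delta w_{i_0}^k\| \le \alpha\delta/(1-\alpha\bar M)$ and $\|\Delta e_{i_0}^k\| \le \delta/(1-\alpha\bar M)$.

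Finally, since only $w_{i_0}^k$ and $e_{i_0}^k$ differ across the two runs at each iteration, the log-density ratio of $\mathcal{O}$ factorizes over these $2K$ independent Laplace vectors. For a $d$-dimensional Laplace vector with scale $\theta$, the log ratio is bounded coordinate-wise by $|a-b|/\theta$, and thus globally by $\|\Delta\|_1/\theta \le \sqrt{d}\,\|\Delta\|_2/\theta$. Plugging in the scales $\theta_{w,i_0}^k = r_{i_0}^k u_{w,i_0}$, $\theta_{e,i_0}^k = r_{i_0}^k u_{e,i_0}$ and the uniform bounds just derived, the per-iteration contribution collapses to $\sqrt{d}\bigl(1/(\alpha u_{e,i_0}) + 1/u_{w,i_0}\bigr)\alpha\delta/\bigl(r_{i_0}^k(1-\alpha\bar M)\bigr)$; summing over $k$ and invoking Definition~\ref{def DP} yields the stated threshold. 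The main obstacle is the affine recursion for $\Delta x_{i_0}^k$: one must both verify $\alpha\bar M<1$ from the parameter range \eqref{range alpha} so the Neumann-type series converges uniformly in $k$, and handle the gradient mismatch using only the $\delta$-adjacency of Definition~\ref{def Adjacency} rather than the stronger bounded-gradient condition assumed in comparable DP analyses.
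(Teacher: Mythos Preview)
Your proposal is correct and follows essentially the same route as the paper: both argue that, under a matched observation sequence, the dual variables and all $j\neq i_0$ trajectories coincide, derive the affine recursion $\|\Delta x_{i_0}^{k+1}\|\le \alpha\bar M\|\Delta x_{i_0}^{k}\|+\alpha\delta$ from smoothness plus $\delta$-adjacency, and then bound the Laplace log-density ratio by $\sqrt{d}\,\|\Delta\|_2/\theta$ summed over iterations. The only cosmetic differences are that you make the $j\neq i_0$ cancellation explicit and use the uniform limit $\alpha\delta/(1-\alpha\bar M)$ whereas the paper keeps the sharper factor $(1-(\alpha\bar M)^k)$ before discarding it in the last line.
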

\begin{proof}
	See Appendix~\ref{section:appendix proof of dp}.
\end{proof}

Theorem~\ref{theorem:dp} establishes the differential privacy guarantee of \(\text{DPP}^2\) for protecting local objective functions over a finite time horizon \(K > 0\). It further reveals a key relationship between noise disturbance and privacy protection: specifically, increasing the magnitude of noise disturbance (i.e., larger values of \(u_{e,i_0}\), \(u_{w,i_0}\), and \(r_{i_0}\)) leads to enhanced data privacy, which is reflected by a smaller privacy budget \(\epsilon_{i_0}\).

Notably, Theorem~\ref{theorem:dp} requires no extra assumptions such as bounded gradients (as is stated in \cite{DIFF2,wang2023decentralized,wang2024robust,Huang2015Diff,XIE2025112338}) or identical gradient difference (i.e., $\nabla f_{i_0}^{(1)}(x_a)-\nabla f_{i_0}^{(1)}(x_b)=\nabla f_{i_0}^{(2)}(x_a)-\nabla f_{i_0}^{(2)}(x_b)$, for some $x_a,x_b\in\mathbb{R}^d$, as is stated in \cite{DingTie2018CDC,Ding2022TAC,Yuan2024DistributedNG}). 

Subsequently, we present the selection of parameter $r_{i_0}$ in the following corollary. 

\begin{corollary}\label{corollary:dp parameters}
Suppose Assumptions~\ref{assumption smooth}, \ref{assumption optimum} and \ref{assumption GL} hold. If $u_{e,{i_0}}>\frac{\sqrt{d}\bar{M}}{\epsilon_{i_0}}$, $u_{w,{i_0}}>0$, $\alpha<\min\{\frac{1}{M}, (\epsilon_{i_0}-\frac{\sqrt{d}\bar{M}}{u_{e,{i_0}}})/[\delta(\frac{\sqrt{d}}{u_{w,{i_0}}}+\epsilon_{i_0})]\}$, and $r_{i_0}\in ((\frac{\tilde{c}}{\epsilon_{i_0}})^{\frac{1}{K-1}},1)$ with $\tilde{c}=\sqrt{d}\Big(\frac{1}{\alpha u_{e,{i_0}}}+\frac{1}{u_{w,{i_0}}}\Big)\frac{\alpha \delta}{1-\alpha \bar{M}}>0$, $\text{DPP}^2$ preserves the $\epsilon_{i_0}$-differential privacy for any node $i_0$'s objective function.
\end{corollary}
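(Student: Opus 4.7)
The plan is to show that the parameter choices in Corollary~\ref{corollary:dp parameters} are sufficient conditions for the key inequality in Theorem~\ref{theorem:dp},
\begin{equation*}
\sum_{k=1}^{K}\frac{\tilde{c}}{r_{i_0}^{k}}\leq\epsilon_{i_0},
\end{equation*}
after which the corollary will follow by a direct invocation of Theorem~\ref{theorem:dp}. So the whole proof amounts to (i) checking that the parameters are admissible (so $\tilde{c}$ is well defined and the stated interval for $r_{i_0}$ is non-empty in $(0,1)$), and (ii) estimating the geometric-type sum using the lower bound on $r_{i_0}$.

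For step (i), I would first note that $\alpha<1/M$ (with $M=\bar M$) yields $1-\alpha\bar M>0$, making $\tilde{c}$ a well-defined positive constant. Next, the requirement $u_{e,i_0}>\sqrt{d}\bar M/\epsilon_{i_0}$ ensures that the numerator $\epsilon_{i_0}-\sqrt{d}\bar M/u_{e,i_0}$ of the second bound on $\alpha$ is strictly positive, so the admissible set for $\alpha$ is non-empty. Using the explicit expansion
\begin{equation*}
\tilde{c}=\frac{\sqrt{d}\,\delta}{(1-\alpha\bar M)u_{e,i_0}}+\frac{\alpha\sqrt{d}\,\delta}{(1-\alpha\bar M)u_{w,i_0}},
\end{equation*}
one clears the denominator $1-\alpha\bar M$ and applies the explicit upper bound on $\alpha$ to obtain $\tilde{c}<\epsilon_{i_0}$. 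Consequently $(\tilde{c}/\epsilon_{i_0})^{1/(K-1)}<1$, so the interval $((\tilde{c}/\epsilon_{i_0})^{1/(K-1)},1)$ lies in $(0,1)$ and a valid $r_{i_0}$ exists.

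For step (ii), since $r_{i_0}\in(0,1)$, the sequence $\{r_{i_0}^{-k}\}$ is increasing, so I would control $\sum_{k=1}^{K}r_{i_0}^{-k}$ either by bounding each term by the largest one or by the closed-form identity $\sum_{k=1}^{K}r_{i_0}^{-k}=(r_{i_0}^{-K}-1)/(1-r_{i_0})$. The lower bound $r_{i_0}^{K-1}>\tilde{c}/\epsilon_{i_0}$ translates into $\tilde{c}\,r_{i_0}^{-(K-1)}<\epsilon_{i_0}$, which can then be combined (using $r_{i_0}>(\tilde{c}/\epsilon_{i_0})^{1/(K-1)}$ and the geometric structure) to furnish the desired $\tilde{c}\sum_{k=1}^{K}r_{i_0}^{-k}\leq\epsilon_{i_0}$.

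The principal obstacle is the asymmetry between the exponent in the hypothesis on $r_{i_0}$ (namely $K-1$) and the dominant term in the sum (namely $r_{i_0}^{-K}$). Bridging this gap cleanly requires exploiting the factor $1-r_{i_0}$ in the closed form of the geometric sum, or equivalently applying a composition-style argument in which the per-iteration privacy loss $\tilde{c}/r_{i_0}^{k}$ is shown to aggregate to $\epsilon_{i_0}$. Once this bounding step is established, the inequality in Theorem~\ref{theorem:dp} is met and the corollary follows immediately.
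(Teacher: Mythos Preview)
Your overall strategy coincides with the paper's: reduce everything to the inequality $\tilde c\sum_{k=1}^{K}r_{i_0}^{-k}\le\epsilon_{i_0}$ from Theorem~\ref{theorem:dp}, verify in step~(i) that the bound on $\alpha$ forces $\tilde c<\epsilon_{i_0}$ (so the interval for $r_{i_0}$ is non-empty), and in step~(ii) control the geometric sum. Your step~(i) is fine and matches the paper exactly.

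The difficulty you flag in step~(ii) is genuine and is precisely where the paper's argument is thin. The paper substitutes $p_{i_0}=1/r_{i_0}>1$, writes the closed-form sum, and reduces the target inequality to
\[
p_{i_0}^{K}-\Bigl(1+\tfrac{\epsilon_{i_0}}{\tilde c}\Bigr)p_{i_0}+\tfrac{\epsilon_{i_0}}{\tilde c}\le 0 .
\]
It then asserts that, for $1<p_{i_0}<\epsilon_{i_0}/\tilde c$, a ``more strict'' sufficient condition is $p_{i_0}^{K}-(\epsilon_{i_0}/\tilde c)\,p_{i_0}\le 0$, i.e.\ $p_{i_0}^{K-1}\le \epsilon_{i_0}/\tilde c$, which is exactly the stated lower bound on $r_{i_0}$. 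Your concern about the $K-1$ versus $K$ mismatch is well placed: under $p_{i_0}<\epsilon_{i_0}/\tilde c$ one has
\[
p_{i_0}^{K}-\Bigl(1+\tfrac{\epsilon_{i_0}}{\tilde c}\Bigr)p_{i_0}+\tfrac{\epsilon_{i_0}}{\tilde c}
=\Bigl(p_{i_0}^{K}-\tfrac{\epsilon_{i_0}}{\tilde c}\,p_{i_0}\Bigr)+\Bigl(\tfrac{\epsilon_{i_0}}{\tilde c}-p_{i_0}\Bigr),
\]
and the second bracket is \emph{positive}, so the ``stricter'' condition does not imply the original one. In other words, the obstacle you identified is not resolved by the paper's argument either; the passage from $r_{i_0}^{K-1}>\tilde c/\epsilon_{i_0}$ to $\tilde c\sum_{k=1}^{K}r_{i_0}^{-k}\le\epsilon_{i_0}$ does not go through as written, and no amount of ``exploiting the factor $1-r_{i_0}$'' will fix it without a stronger hypothesis on $r_{i_0}$ (for instance one that also controls the number of terms $K$). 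So your hesitation here is justified, and you should not expect to close the gap with the tools you listed.
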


\textbf{Selection of DP parameters}: To achieve the convergence of $\text{DPP}^2$ with DP, we need to select a feasible stepsize that satisfies both the conditions in Theorem~\ref{theorem:nonconvex} and Corollary~\ref{corollary:dp parameters}. To do this, we let $0<\underline{\alpha}<\bar{\alpha}<(\epsilon_{i_0}-\frac{\sqrt{d}\bar{M}}{u_{e,{i_0}}})/[\delta(\frac{\sqrt{d}}{u_{w,{i_0}}}+\epsilon_{i_0})]$ and select the stepsize $\alpha$ that satisfies $\min\{\bar{\lambda}_\bG, \underline{\alpha}\}<\alpha<\min\{\bar{\alpha},\frac{\xi_6}{\xi_7}\}$ (where $\bar{\lambda}_\bG$ is given in \eqref{range bar lambda G}). Note that $\alpha$ is well-defined since $\underline{\alpha}<\bar{\alpha}$ and $\bar{\lambda}_\bG<\frac{\xi_6}{\xi_7}$ in \eqref{range bar lambda G}. Then, given the required $\epsilon_{i_0}$-DP level, we can select $u_{e,{i_0}}>\frac{\sqrt{d}\bar{M}}{\epsilon_{i_0}}$, $u_{w,{i_0}}>0$. Ultimately, we are able to determine the noise decay rate by $r_{i_0}\in ((\frac{\tilde{c}}{\epsilon_{i_0}})^{\frac{1}{K-1}},1)$.

\section{NUMERICAL EXPERIMENT}\label{section:simulation}
We evaluate the convergence performance of $\text{DPP}^2$ via the distributed binary classification problem with nonconvex regularizers \cite{antoniadis2011penalized}, which adheres to Assumptions~\ref{assumption smooth}--\ref{assumption optimum} and is written as
\begin{equation*}
    f_i(x)=\frac{1}{m}\sum_{s=1}^{m}\log(1+\exp(-y_{is}x^{\mathsf{T}}z_{is}))+\sum_{t=1}^{d}\frac{\lambda\omega([x]_t)^2}{1+\omega([x]_t)^2}.
\end{equation*}
Here, $m$ is the number of data samples of each node. Also, $y_{is}\in \{-1,1\}$ and $z_{is}\in \mathbb{R}^d$ denote the label and the feature for the $s$-th data sample of node $i$, respectively. In the simulation, we set $N=50$, $d=10$, $m=200$ with the regularization parameters $\lambda=0.001$ and $\omega=1$. Additionally, we randomly generate $y_{is}$ and $z_{is}$ for each node $i$, which results in local objective functions with the smoothness parameter $\bar{M}=5.03$. We construct a connected geometric graph with the geometric index $r=0.3$, leading to a network with $50$ nodes and $255$ edges. Experiments are executed on Intel Core i7-8700 CPU @ 3.20GHz, 3192 Mhz, 6 cores with 16GB memory.

We first explore the relationship between the noise decay rate and convergence speed. For all $i\in\mathcal{V}$, we fix $u_{e,i}=u_{w,i}=\bar{u}=1$ and let $r_i=\bar{r}$, where $\bar{r}$ takes on the values 0, 0.5, 0.9, 0.95, 0.97, 0.98, and 0.99. Secondly, to investigate the relationship between the noise initialization and convergence speed, we fix $\bar{r}=0.95$ and set $\bar{u}$ to the values 0, 0.1, 0.3, 0.6, 1, 3, and 5. The parameters of $\text{DPP}^2$ are selected as $\alpha=0.1, \beta=0.05,\rho=10$ and random $\eta^k\in(0,1)$. We measure the optimality by $\|\mb{x}^{k}-\bar{\bx}^k\|^2+\frac{1}{N}\|\sum_{i=1}^N \nabla {f}_i(x_i^k)\|^2$ and show the results in Fig. \ref{fig:q} and Fig. \ref{fig:u}, respectively.

\begin{figure}[h]
    \centering
    \begin{minipage}[t]{0.45\linewidth}
        \centering
        \includegraphics[width=\linewidth]{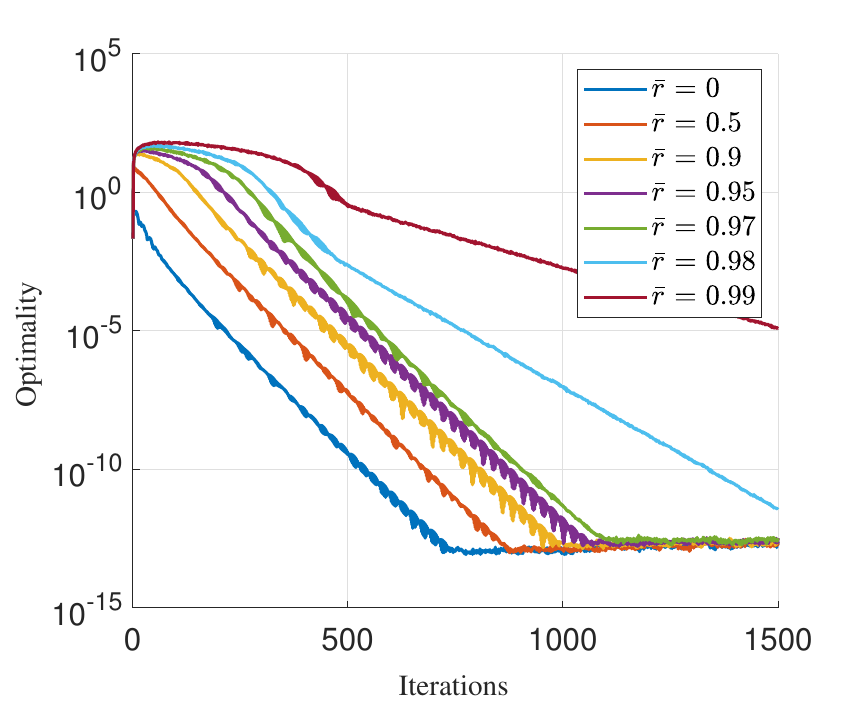}  
        \caption{Convergence performance with respect to $\bar{r}$.}
        \label{fig:q}
    \end{minipage}
    \hfill  
    \begin{minipage}[t]{0.45\linewidth}
        \centering
        \includegraphics[width=\linewidth]{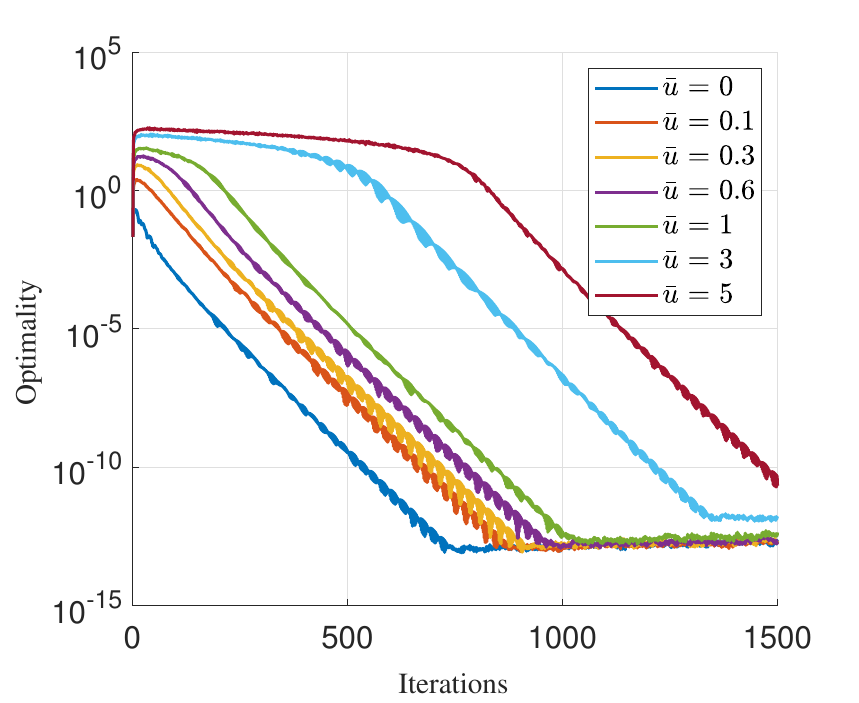}
        \caption{Convergence performance with respect to $\bar{u}$.}
        \label{fig:u}
    \end{minipage}
\end{figure}


We also compare our proposed $\text{DPP}^2$ to algorithms with differential privacy guarantees, including the distributed algorithm via direction and state perturbation (DiaDSP) \cite{Ding2022TAC} and the nonconvex differentially private primal-dual algorithm (PPDC) \cite{XIE2025112338}. Note that DiaDSP can be regarded as a special case of the nonconvex differentially private gradient tracking algorithm (PGTC) \cite{XIE2025112338} without compressed communication. In our experiment, we simulate PPDC without compressed communication and take the noise decay rate $\bar{r}$ to the values 0.1, 0.2, 0.3, 0.4, 0.5, 0.6, 0.7, 0.8, and 0.9. For each specific value of $\bar{r}$, we implement $\text{DPP}^2$, DiaDSP and PPDC for $K=500$ iterations respectively, and then calculate $\|\frac{1}{N}\nabla\tilde{f}(\mathbf{x}^K)\|^2$. We set the algorithm parameters of these algorithms to reach the same privacy budgets (i.e., privacy levels). Specifically, in $\text{DPP}^2$, we set $\rho=10,\alpha=0.0994,\beta=0.05$, $u_{w,i} = 0.994$, $u_{e,i} = 1$ and random $\eta^k\in(0,1)$; In DiaDSP, we set $\alpha=0.01$, $u_{x,i} = 2$ and $u_{y,i} =5$; In PPDC, we let $\gamma = 65, \omega = 5, \eta= 0.01$, $u_{x,i} = 1$ and $u_{v,i} = 1$. We present the numerical result in Fig. \ref{fig:eps}.

\begin{figure}[h]
    \centering
    \includegraphics[width=0.5\linewidth]{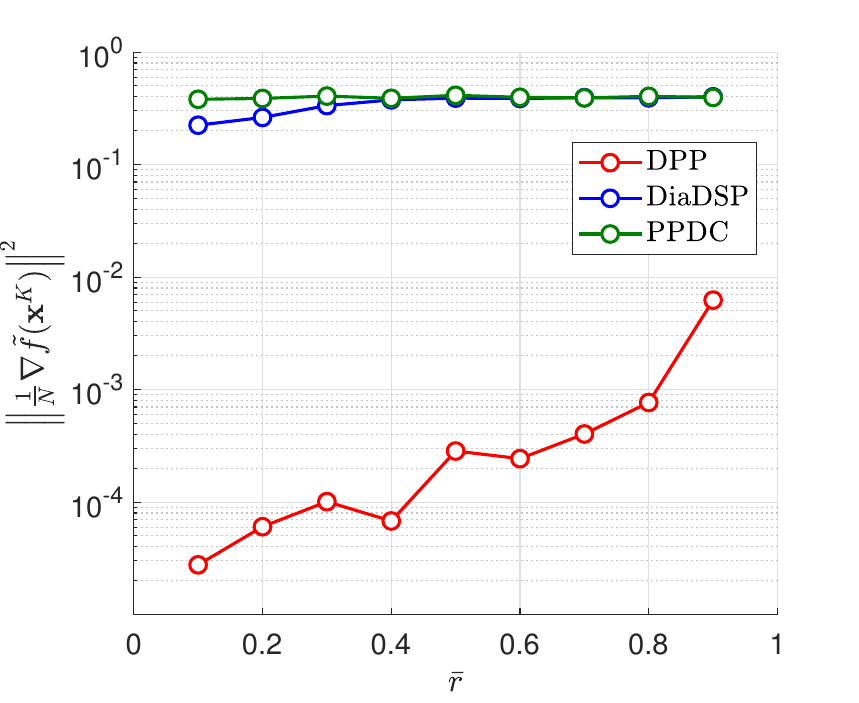}
    \caption{Convergence performance with respect to $\bar{r}$.}
    \label{fig:eps}
\end{figure}

The simulation results in Fig. \ref{fig:q} and Fig. \ref{fig:u} verify the exact convergence properties of $\text{DPP}^2$ with double privacy protection. Moreover, the results also reveal that the convergence performance is better with smaller noise decay rates ($\bar{r}$) and initialization values ($\bar{u}$), which aligns with the theoretical guarantees in Theorem~\ref{theorem:nonconvex} and Theorem~\ref{theorem:PL}. Furthermore, the comparative study presented in Fig. \ref{fig:eps} illustrates a trade-off between differential privacy levels and convergence speed: higher privacy levels (reflected by larger $\bar{r}$) inevitably lead to slower convergence. Notably, under identical level of DP, $\text{DPP}^2$ demonstrates superior convergence performance compared to DiaDSP and PPDC.

\section{CONCLUSIONS}\label{section conclusion}
We have proposed a decentralized proximal primal-dual algorithm with double protection of privacy, referred to as $\text{DPP}^2$, for solving nonconvex, smooth optimization problems, in which a novel two-tier privacy protection strategy is designed. Specifically, the privacy protection strategy adopts decaying Laplace noise to achieve both exact convergence and differential privacy. It also lets the agents transmit mixed variables to protect local decisions and gradients from being eavesdropped even when the Laplace noise becomes tiny. By leveraging decaying Laplace noise, $\text{DPP}^2$ exhibits sublinear convergence to stationary solutions and linear convergence to the global optimum under the P-{\L} condition. These convergence results outperform the alternative methods in terms of convergence speed and solution accuracy. The numerical results demonstrate that, compared with other state-of-the-art differential privacy algorithms, $\text{DPP}^2$ not only attains exact convergence but also enjoys faster convergence while maintaining the same level of differential privacy.
			
			\appendix
			
			\subsection{Proof of Lemma~\ref{Lemma vk+1 - vk}} \label{proof: lemma 1}
\begin{proof}
    (\romannumeral1) First, we show that the parameters in \eqref{def parameter} and the parameter selections in \eqref{range c alpha gamma}--\eqref{range alpha} are well-defined.

    From $\bar{c}_\theta<\frac{1}{\kappa_\bG}$ in \eqref{range c alpha gamma} and $2(1+\frac{3\bar{M}^2}{4}+\frac{\bar{M}^2\bar{c}_\alpha}{2})/(\rho\bar{\lambda}_\bL(\frac{1}{\kappa_\bG}-\bar{c}_\theta))$ in \eqref{range rho}, we have $\xi_1>0$. From $\gamma<\frac{\bar{c}_\theta}{5}$ in \eqref{range c alpha gamma} and $1/\sqrt{\underline{\lambda}_\bL^2(\frac{\bar{c}_\theta}{2}-\frac{5\gamma}{2})}$ in \eqref{range rho}, we obtain $\xi_3>0$. Since $\bar{c}_\alpha>1$ in \eqref{range c alpha gamma}, $\bar{\lambda}_\bG<\frac{\xi_6}{\bar{c}_\alpha\xi_7}$ in \eqref{range bar lambda G}, we ensure the well-posedness of $\alpha$ in \eqref{range alpha}. Moreover, due to $1<\bar{c}_\alpha<\frac{\kappa_\bL}{\kappa_\bL-1}$ in \eqref{range c alpha gamma}, we derive 
    \begin{align*}
        \underline{\lambda}_\bG\overset{\eqref{def G}}{=}&(1-\kappa_\bL)\alpha+\kappa_\bL\bar{\lambda}_\bG\overset{\eqref{def parameter}}{=}(\kappa_\bL-(\kappa_\bL-1)\bar{c}_\alpha)\bar{\lambda}_\bG >0,
    \end{align*}
    and thus $\bG\succ \mathbf{O}_{Nd}$.

    (\romannumeral2) Subsequently, we establish some results for the weight matrices. From \eqref{def G}, there exists an orthogonal matrix $\tilde{\bR}\in\mathbb{R}^{N\times N}$ with the first column $\tilde{\bR}_{*1}=\frac{1}{\sqrt{N}}\mathbf{1}_{N}$ such that $\bL=\tilde{\bR}\Lambda_\bL\tilde{\bR}^\mathsf{T}\otimes \bI_d$, where $\Lambda_\bL=\operatorname{diag}([0,\lambda_{N-1}^\bL,\dots,\lambda_{1}^\bL])$ with $0<\lambda_{N-1}^\bL<\cdots<\lambda_{1}^\bL$. Similarly, $\bG=\tilde{\bR}\Lambda_\bG\tilde{\bR}^\mathsf{T}\otimes \bI_d$, where $\Lambda_\bG=\operatorname{diag}([\alpha,\lambda_{2}^\bG,\dots,\lambda_{N}^\bG])$ with $0<\lambda_{N}^\bG<\cdots<\lambda_{2}^\bG<\alpha$. Also, due to Assumption~\ref{assumption GL}, $\operatorname{Null}(\bL)=\operatorname{Null}(\bK)=\mathcal{S}$, and thus
\begin{gather}
	\bK\bL=\bL\bK=\bL, \quad \bJ\bL=\bJ\bK=\mathbf{O}_{Nd},\label{JL=JK=0}\\
	\bK\bK=\bK, \quad \bJ\bJ=\bJ.\label{KK=K JJ=J}
\end{gather}
Additionally, from the definition $\bQ=\bL^{\dagger}$, we have
\begin{equation}\label{QL=LQ=K}
    \bQ\bL=\bL\bQ=\bK,
\end{equation}
and the matrices $\bQ,\bJ,\bK,\bL$ are commutative with each other in matrix multiplication.

Subsequently, we establish some results based on the parameter selections in \eqref{range c alpha gamma}--\eqref{range alpha}. Eq. \eqref{range c alpha gamma} and \eqref{range rho} imply
\begin{equation}\label{rho underlinelambda >1}
    \theta <1,\quad \rho \underline{\lambda}_\bL >1.
\end{equation}
From \eqref{range bar lambda G}, we have 
\begin{gather}\label{rho L G}
	\mathbf{O}\preceq\rho\bL\bG\preceq \bK\preceq\bI.\\
	\label{lambda G<1}
	\bar{\lambda}_\bG<1,\quad \mathbf{O}\preceq\theta\rho\bL\preceq\bK.
\end{gather}
It follows from \eqref{def G} and $\mb{J}\bL=\mathbf{O}_{Nd}$ in \eqref{JL=JK=0} that
	\begin{align}
		\mb{J}\mb{G} = & \mb{J}(\alpha \mathbf{I}_{Nd}-\beta \bL)=\alpha \mb{J}. \label{JGk}
	\end{align}
From \eqref{xk+1 final}, \eqref{JL=JK=0} and \eqref{JGk}, we have 
\begin{align}
	\bar{\mb{x}}^{k+1} \!-\! \bar{\mb{x}}^k &= -\mb{J}\big( \mb{G}(\mb{g}^k \!+\!\mb{q}^k\!+\!\rho \bL(\bx^k\!+\!\bw^k))\!+\!\bw^k\!+\!\beta\bL\be^k \big)\notag\\
    &= - (\alpha\bar{\mb{g}}^k+\bJ\bw^k). \label{bar xk+1 - bar xk}
\end{align}
Due to \eqref{smooth tilde f}, we have
\begin{align}
	\|\mb{g}_a^k - \mb{g}^k\|^2 \leq \bar{M}^2\|\bar{\mb{x}}^k - \mb{x}^k\|^2 = \bar{M}^2 \|\mb{x}^k\|^2_\mb{K}. \label{g0k-gk}
\end{align}
Then, since $\lambda_1^{\mb{J}} = 1$, we have
\begin{align}
	\|\bar{\mb{g}}_a^k - \bar{\mb{g}}^k\|^2 = & \|\mb{J}(\mb{g}_a^k - \mb{g}^k)\|^2 \overset{\eqref{g0k-gk}}{\leq} \bar{M}^2 \|\mb{x}^k\|^2_\mb{K}. \label{bar g0k- bar gk}
\end{align}
From \eqref{smooth tilde f} and \eqref{bar xk+1 - bar xk}, we have
\begin{align}
	\|\mb{g}_a^{k+1} - \mb{g}_a^k\|^2 \leq& \bar{M}^2 \|\bar{\mb{x}}^{k+1} - \bar{\mb{x}}^{k}\|^2 =  \bar{M}^2\|\alpha\bar{\mb{g}}^k-\bJ\bw^k\|^2\notag\\
    \leq& 2\bar{M}^2(\alpha^2\|\bar{\bg}^k\|^2+\|\bw^k\|^2). \label{g0k+1-g0k}
\end{align}

    (\romannumeral3) We then bound each term of the definition of $V^{k+1}$ (given by \eqref{def V}).

	For the first term of the definition of $V^{k+1}$,

	\begin{align}\label{Vk term1}
		&\frac{1}{2}\|\bx^{k+1}\|^2_\bK \notag\\
		\overset{\eqref{xk+1 new}}{=}&\frac{1}{2}\|\bx^k-\bG\big(\bg^k-\bg_a^k+\bq^k+\bg_a^k+\rho \bL(\bx^k+\bw^k) \big)+\bw^k+\beta \bL\be^k\|^2_\bK \notag\\
		=& \frac{1}{2}\|(\bI-\rho\bL\bG)\bx^k+(\bI-\rho\bL\bG)\bw^k+\beta\bL\be^k\|^2_\bK\notag\\
		&-\langle (\bI-\rho\bL\bG)\bx^k+(\bI-\rho\bL\bG)\bw^k+\beta\bL\be^k,\bG\bK(\bg^k-\bg_a^k+\bq^k+\bg_a^k) \rangle\notag\\
        &+\frac{1}{2}\|\bg^k-\bg_a^k+\bq^k+\bg_a^k\|^2_{\bG^2\bK}\notag\\
		\leq & \frac{1}{2}\|\bx^k\|_\bK^2-\frac{1}{2}\|\bx^k\|^2_{2\rho\bL\bG-\rho^2\bL^2\bG^2}+\frac{1}{2}\|(\bI-\rho\bL\bG)\bx^k\|^2_{\bG\bK}+\frac{1}{2}\|(\bI-\rho\bL\bG)\bw^k+\beta\bL\be^k\|^2_{(\bG^{-1}+\bI)\bK}\notag\\
        &+\frac{1}{2}\|(\bI-\rho\bL\bG)\bx^k\|^2_{\bG\bK}+\frac{1}{2}\|\bg^k-\bg_a^k\|^2_{\bG\bK}-\langle \bx^k,\bG\bK\bs^k\rangle+\frac{1}{2\gamma}\|\bx^k\|^2_{ \rho^2\bL^2\bG^2}+\frac{1}{2}\|\bs^k\|^2_{\gamma\bG^2\bK}\notag\\
        &+\frac{1}{2}\|(\bI-\rho\bL\bG)\bw^k+\beta\bL\be^k\|^2_\bK+\frac{1}{\gamma}\|\bg^k-\bg_a^k\|^2_{\bG^2\bK}+\|\bs^k\|^2_{\gamma\bG^2\bK}\notag\\
        & +\frac{1}{\gamma}\|\bg^k-\bg_a^k\|^2_{\bG^2\bK}+\|\bs^k\|^2_{\gamma\bG^2\bK}\notag\\
		\overset{\eqref{rho L G}}{\leq}&\frac{1}{2}\|\bx^k\|^2_\bK-\|\bx^k\|^2_{\rho\bL\bG-(\frac{1}{2}(1+\frac{1}{\gamma})\rho^2\bL^2\bG+\bG\bK)}-\langle\bx^k,\bG\bK\bs^k\rangle+\|\bg^k-\bg_a^k\|^2_{\frac{1}{2}\bG\bK+\frac{2}{\gamma}\bG^2\bK}\notag\\
		&+\frac{5}{2}\|\bs^k\|^2_{\gamma \bG^2\bK}+\|\bw^k\|^2_{(\bG^{-1}+2\bI)\bK}+\|\beta\bL\be^k\|^2_{(\bG^{-1}+2\bI)\bK}\notag\\
		\overset{\eqref{g0k-gk}}{\leq} & \frac{1}{2}\|\bx^k\|^2_\bK -\langle \bx^k,\bG\bK\bs^k\rangle+\frac{5}{2}\|\bs^k\|^2_{\gamma\bG^2\bK}- \|\bx^k\|^2_{\rho\bL\bG-(\frac{1}{2}(1+\frac{1}{\gamma})\rho^2\bL^2\bG^2+\bG\bK+(\frac{1}{2}\bar{\lambda}_\bG+\frac{2\bar{\lambda}_\bG^2}{\gamma})\bar{M}^2\bK)}\notag\\
        &+(\frac{1}{\underline{\lambda}_{\bG}}+2)(\|\bw^k\|^2+\beta^2\|\be^k\|^2).
	\end{align}
    
	For the second term of the definition of $V^{k+1}$,
	\begin{align}\label{Vk term2}
		&\frac{1}{2}\|\bs^{k+1}\|^2_{\theta \bG\bK+\frac{\bQ\bG}{\rho}}=\frac{1}{2}\|\bq^{k+1}+\bg_a^{k+1}\|_{\theta \bG\bK+\frac{\bQ\bG}{\rho}}^2\notag\\
		\overset{\eqref{qk+1 new}}{=}&\frac{1}{2}\|\bq^k+\bg_a^k+\rho\bL(\bx^k+\bw^k)+\bg_a^{k+1}-\bg_a^k\|_{\theta \bG\bK+\frac{\bQ\bG}{\rho}}^2\notag\\
		=&\frac{1}{2}\|\bs^k\|_{\theta \bG\bK+\frac{\bQ\bG}{\rho}}^2+\langle (\theta \bG\bK+\frac{\bQ\bG}{\rho})\bs^k,\rho \bL\bx^k\rangle+\langle(\theta \bG\bK+\frac{\bQ\bG}{\rho})\bs^k,\rho\bL\bw^k+\bg^{k+1}-\bg_a^k\rangle\notag\\
		&+\frac{1}{2}\|\rho\bL \bx^k+\rho\bL\bw^k+\bg_a^{k+1}-\bg_a^k\|_{\theta \bG\bK+\frac{\bQ\bG}{\rho}}^2\notag\\
		\overset{\eqref{QL=LQ=K}}{\leq} &\frac{1}{2}\|\bs^k\|_{\theta \bG\bK+\frac{\bQ\bG}{\rho}}^2 + \langle (\theta\rho \bL\bG+\bG\bK)\bx^k,\bs^k\rangle +\|\bs^k\|^2_{\theta^2\bG^2\bK+\frac{\bQ^2\bG^2}{\rho^2}}+\|\bw^k\|^2_{\rho^2\bL^2}+\|\bg_a^{k+1}-\bg_a^k\|^2\notag\\
		&+\frac{1}{2}\|\bx^k\|^2_{\theta\rho^2\bL^2\bG+\rho\bL\bG}\!+\!\frac{1}{2}\|\rho \bL\bw^k+\bg_a^{k+1}\!-\!\bg_a^k\|_{\theta \bG\bK+\frac{\bQ\bG}{\rho}}^2\notag\\
        &+\langle (\theta \rho \bL\bG+\bG\bK)\bx^k,\rho \bL\bw^k+\bg_a^{k+1}-\bg_a^k\rangle\notag\\
		\leq & \frac{1}{2}\|\bs^k\|_{\theta \bG\bK+\frac{\bQ\bG}{\rho}}^2 + \langle (\theta\rho \bL\bG+\bG\bK)\bx^k,\bs^k\rangle+\|\bs^k\|^2_{\theta^2\bG^2\bK+\frac{\bQ^2\bG^2}{\rho^2}}+\|\bw^k\|^2_{\rho^2\bL^2}+\|\bg_a^{k+1}-\bg_a^k\|^2\notag\\
		&+\frac{1}{2}\|\bx^k\|^2_{\theta\rho^2\bL^2\bG+\rho\bL\bG}+\|\bx^k\|^2_{\theta^2\rho^2\bL^2\bG^2+\bG^2\bK}+\|\bw^k\|^2_{\rho^2\bL^2}+\|\bg_a^{k+1}-\bg_a^k\|^2\notag\\
        &+\|\bw^k\|^2_{\theta\rho^2\bL^2\bG+\rho\bL\bG}+\|\bg_a^{k+1}-\bg_a^k\|^2_{\theta\bG\bK+\frac{\bQ\bG}{\rho}}\notag\\
		\overset{\eqref{rho underlinelambda >1}}{\leq} & \frac{1}{2}\|\bs^k\|_{\theta \bG\bK+\frac{\bQ\bG}{\rho}}^2 + \langle (\theta\rho \bL\bG+\bG\bK)\bx^k,\bs^k\rangle +\|\bx^k\|^2_{\frac{1}{2}\theta\rho^2\bL^2\bG+\frac{1}{2}\rho \bL\bG+\theta^2\rho^2\bL^2\bG^2+\bG^2\bK}\notag\\
        &+\|\bs^k\|^2_{\theta^2\bG^2\bK+\frac{\bQ^2\bG^2}{\rho^2}}+\|\bw^k\|^2_{2\rho^2\bL^2+\theta\rho^2\bL^2\bG+\rho\bL\bG}+4\|\bg_a^{k+1}-\bg_a^k\|^2\notag\\
		\underset{\eqref{lambda G<1}}{\overset{\eqref{g0k+1-g0k}}{\leq}}&\frac{1}{2}\|\bs^k\|_{\theta \bG\bK+\frac{\bQ\bG}{\rho}}^2 + \langle (\theta\rho \bL\bG+\bG\bK)\bx^k,\bs^k\rangle +\|\bx^k\|^2_{\frac{1}{2}\theta\rho^2\bL^2\bG+\frac{1}{2}\rho \bL\bG+2\bG^2\bK}\notag\\               &+\Big(2\rho^2\bar{\lambda}_\bL^2+\theta\rho^2\bar{\lambda}_\bL^2\bar{\lambda}_\bG+\rho\bar{\lambda}_\bL\bar{\lambda}_\bG+8\bar{M}^2\Big)\|\bw^k\|^2+\|\bs^k\|^2_{\theta^2\bG^2\bK+\frac{\bQ^2\bG^2}{\rho^2}}+8\bar{M}^2\alpha^2\|\bar{\bg}^k\|^2.
	\end{align}
    
	For the third term of the definition of $V^{k+1}$,
	\begin{align}\label{Vk term3}
		&\langle \bx^{k+1},\theta \bK \bs^{k+1}\rangle\notag\\
		=&\langle \bx^k-\bG(\bq^k+\bg_a^k+\bg^k-\bg_a^k+\rho \bL (\bx^k+\bw^k))+\bw^k+\beta\bL\be^k,\notag\\
        &\theta\bK(\bq^k+\bg_a^k+\rho\bL(\bx^k+\bw^k)+\bg_a^{k+1}-\bg_a^k)\rangle\notag\\
		=&\langle \bx^k,\theta\bK\bs^k\rangle+\|\bx^k\|^2_{\theta\rho\bL}+\langle \bx^k,\theta\bK(\rho\bL\bw^k+\bg_a^{k+1}-\bg_a^k)\rangle-\|\bs^k\|^2_{\theta\bG\bK}\notag\\
		&-\langle\theta\rho\bL\bG\bs^k,\bx^k\rangle-\langle\theta\bG\bK\bs^k,\rho\bL\bw^k+\bg_a^{k+1}-\bg_a^k\rangle\notag\\
        &-\langle \theta\bG\bK(\bg^k-\bg_a^k),\bs^k+\rho\bL(\bx^k+\bw^k)+\bg_a^{k+1}-\bg_a^k\rangle \notag\\
		&-\langle\theta\rho\bL\bG\bx^k,\bs^k+\rho\bL\bw^k+\bg_a^{k+1}-\bg_a^k\rangle-\|\bx^k\|^2_{\theta\rho^2\bL^2\bG}\notag\\
		&+\langle\beta\bL\be^k+(\bI-\rho\bL\bG)\bw^k,\theta\bK(\bs^k+\rho\bL(\bx^k+\bw^k)+\bg_a^{k+1}-\bg_a^k)\rangle\notag\\
		\leq & \langle \bx^k,\theta\bK\bs^k\rangle+\|\bx^k\|^2_{\theta\rho\bL}+\|\bx^k\|^2_{\theta^2\bK}+\frac{1}{2}\|\bw^k\|^2_{\rho^2\bL^2}+\frac{1}{2}\|\bg_a^{k+1}-\bg_a^k\|^2-\|\bs^k\|_{\theta\bG\bK}\notag\\
        &-\langle\theta\rho \bL\bG\bs^k,\bx^k\rangle+\|\bs^k\|^2_{\theta^2\bG^2\bK}+\frac{1}{2}\|\bw^k\|^2_{\rho^2\bL^2}+\frac{1}{2}\|\bg_a^{k+1}-\bg_a^k\|^2+\frac{1}{2}\|\bg^k-\bg_a^k\|^2_{\bG\bK}\notag\\
		&+\frac{1}{2}\|\bs^k\|^2_{\theta^2\bG\bK}+\frac{1}{2}\|\bg^k-\bg_a^k\|^2_{\theta\rho\bL\bG}+\frac{1}{2}\|\bx^k\|^2_{\theta\rho\bL\bG}+\|\bg^k-\bg_a^k\|^2_{\theta^2\bG^2\bK}+\frac{1}{2}\|\bw^k\|^2_{\rho^2\bL^2}\notag\\
		&+\frac{1}{2}\|\bg_a^{k+1}-\bg_a^k\|^2-\langle\theta\rho \bL\bG\bx^k,\bs^k\rangle+\|\bx^k\|^2_{\theta^2\rho^2\bL^2\bG^2}+\frac{1}{2}\|\bw^k\|^2_{\rho^2\bL^2}+\frac{1}{2}\|\bg_a^{k+1}-\bg_a^k\|^2\notag\\
		&-\|\bx^k\|^2_{\theta\rho^2\bL^2\bG}+2\|\beta\bL\be^k+(\bI-\rho\bL\bG)\bw^k\|^2_{\bG^{-2}\bK}+\frac{1}{2}\|\bs^k\|^2_{\theta^2\bG^2\bK}+\frac{1}{2}\|\bx^k\|^2_{\theta^2\rho\bL\bG^2}\notag\\
        &+\frac{1}{2}\|\bw^k\|^2_{\theta^2\rho\bL\bG^2}+\frac{1}{2}\|\bg_a^{k+1}-\bg_a^k\|^2_{\theta^2\bG^2\bK}\notag\\
		\underset{\eqref{lambda G<1}}{\overset{\eqref{rho underlinelambda >1}\eqref{rho L G}}{\leq}} & \langle \bx^k,\theta\bK\bs^k\rangle -2\langle \theta\rho\bL\bG\bx^k,\bs^k\rangle-\|\bs^k\|^2_{\theta\bG\bK-(\frac{3}{2}\theta^2\bG^2\bK+\frac{1}{2}\theta^2\bG\bK)}\notag\\
        &+\|\bx^k\|^2_{-\theta\rho^2\bL^2\bG+\theta\rho\bL+\frac{1}{2}\theta^2\bK+\theta\rho\bL\bG+\frac{3}{2}\bG^2\bK}+\|\bg^k-\bg_a^k\|^2_{\frac{1}{2}\bG\bK+\frac{1}{2}\theta\rho\bL\bG+\theta^2\bG^2\bK}\notag\\
        &+\frac{5}{2}\|\bg_a^{k+1}-\bg_a^k\|^2+\|\bw^k\|^2_{2\rho^2\bL^2+\frac{1}{2}\theta^2\rho\bL\bG^2}+\frac{4}{\underline{\lambda}_\bG^2}(\beta^2\|\be^k\|^2+\|\bw^k\|^2)\notag\\
		\underset{\eqref{bar g0k- bar gk}}{\overset{\eqref{g0k+1-g0k}}{\leq}} & \langle \bx^k,\theta\bK\bs^k\rangle -2\langle \theta\rho\bL\bG\bx^k,\bs^k\rangle-\|\bs^k\|^2_{\theta\bG\bK-(\frac{3}{2}\theta^2\bG^2\bK+\frac{1}{2}\theta^2\bG\bK)}\notag\\
        &+\|\bx^k\|^2_{-\theta\rho^2\bL^2\bG+\theta\rho\bL+\frac{1}{2}\theta^2\bK+\theta\rho\bL\bG+\frac{3}{2}\bG^2\bK}+\bar{\lambda}_\bG(\frac{1}{2}+\frac{1}{2}\theta\rho\bar{\lambda}_{\bL}+\theta^2\bar{\lambda}_{\bG})\bar{M}^2\|\bx^k\|^2_{\bK}\notag\\
        &+\Big(5\bar{M}^2+2\rho^2\bar{\lambda}_\bL^2+\frac{1}{2}\theta^2\rho\bar{\lambda}_\bL\bar{\lambda}_\bG^2+\frac{4}{\underline{\lambda}_\bG^2}\Big)\|\bw^k\|^2+5\bar{M}^2\alpha^2\|\bar{\bg}^k\|^2+\frac{4\beta^2}{\underline{\lambda}_\bG^2}\|\be^k\|^2.
	\end{align}
	For the last term of the definition of $V^{k+1}$,
	\begin{align}\label{Vk term4}
		&f(\bar{x}^{k+1})-f^*=\tilde{f}(\bar{\bx}^{k+1})-f^*\notag\\
		=&\tilde{f}(\bar{\bx}^{k})-f^*+\tilde{f}(\bar{\bx}^{k+1})-\tilde{f}(\bar{\bx}^{k})\notag\\
		\leq & \tilde{f}(\bar{\bx}^{k})-f^*+\langle\bar{\bx}^{k+1}-\bar{\bx}^k,\nabla \tilde{f}(\bar{\bx}^k)\rangle+\frac{\bar{M}}{2}\|\bar{\bx}^{k+1}-\bar{\bx}^k\|^2\notag\\
		\overset{\eqref{bar xk+1 - bar xk}}{\leq} & \tilde{f}(\bar{\bx}^{k})-f^* - \langle\bJ(\alpha\bg^k+\bw^k),\bg_a^k\rangle+\frac{\bar{M}}{2}\|\alpha\bar{\mb{g}}^k+\bJ\bw^k\|^2\notag\\
		\overset{\eqref{KK=K JJ=J}}{\leq}& \tilde{f}(\bar{\bx}^{k})-f^* - \langle\alpha\bar{\bg}^k+\bJ\bw^k,\bar{\bg}_a^k\rangle+\frac{\bar{M}}{2}\|\alpha\bar{\mb{g}}^k+\bJ\bw^k\|^2\notag\\
		=&\tilde{f}(\bar{\bx}^{k})-f^* -\frac{\alpha}{2}\langle\bar{\bg}^k,\bar{\bg}^k+\bar{\bg}_a^k-\bar{\bg}^k\rangle-\frac{\alpha}{2}\langle\bar{\bg}^k-\bar{\bg}_a^k\!+\!\bar{\bg}_a^k,\bar{\bg}_a^k\rangle\notag\\
		&-\langle\bJ\bw^k,\bar{\bg}_a^k\rangle+\alpha^2\bar{M}\|\bar{g}^k\|^2+\bar{M}\|\bw^k\|^2\notag\\
		\leq&\tilde{f}(\bar{\bx}^{k})-f^*-\frac{\alpha}{4}\|\bar{\bg}^k\|^2+\frac{\alpha}{4}\|\bar{\bg}_a^k-\bar{\bg}^k\|^2-\frac{\alpha}{4}\|\bar{\bg}_a^k\|^2\notag\\
		&+\!\frac{\alpha}{4}\|\bar{\bg}_a^k-\bar{\bg}^k\|^2\!+\!\frac{\alpha}{8}\|\bar{\bg}_a^k\|^2\!+\!(\frac{2}{\alpha}\!+\!\bar{M})\|\bw^k\|^2\!+\!\alpha^2\bar{M}\|\bar{\bg}^k\|^2\notag\\
		=&\tilde{f}(\bar{\bx}^{k})-f^*-\alpha(\frac{1}{4}-\bar{M}\alpha)\|\bar{\bg}^k\|^2+\frac{\alpha}{2}\|\bar{\bg}_a^k-\bar{\bg}^k\|^2-\frac{\alpha}{8}\|\bar{\bg}_a^k\|^2+(\frac{2}{\alpha}+\bar{M})\|\bw^k\|^2\notag\\
		\overset{\eqref{bar g0k- bar gk}}{\leq}&\tilde{f}(\bar{\bx}^{k})-f^*-\alpha(\frac{1}{4}-\bar{M}\alpha)\|\bar{\bg}^k\|^2+\frac{\alpha\bar{M}^2}{2}\|\bx^k\|^2_{\bK}-\frac{\alpha}{8}\|\bar{\bg}_a^k\|^2+(\frac{2}{\alpha}+\bar{M})\|\bw^k\|^2.
	\end{align}
	Combining \eqref{Vk term1}--\eqref{Vk term4} yields \eqref{tilde Vk+1-tilde Vk leq 0}.

(\romannumeral4) Finally, we illustrate how the sequence in \eqref{tilde Vk+1-tilde Vk leq 0} descends along iterations based on the well-defined parameters in \eqref{range c alpha gamma}--\eqref{range alpha}.

From $0 < \bar{\lambda}_{\mb{G}} < \min\{\frac{\xi_1}{\xi_2},(-\xi_4+\sqrt{\xi_4^2+4\xi_3\xi_5})/(2\xi_5)\}$ in \eqref{range bar lambda G}, we have
\begin{align}
	\label{xi1-xi2} & \bar{\lambda}_{\bG}(\xi_1-\xi_2\bar{\lambda}_{\bG}) > 0 , \\
	\label{xi3-xi5} & \bar{\lambda}_{\bG}^2(\xi_3-\xi_4\bar{\lambda}_{\bG}-\xi_5\bar{\lambda}_{\bG}^2)> 0.
\end{align}
Since $\bar{\lambda}_\bG<\alpha < \frac{\xi_6}{\xi_7}$ in \eqref{range alpha}, we obtain
\begin{align}
	\alpha(\xi_6-\xi_7\alpha)>0.\label{xi6-xi7}
\end{align}
Ultimately, combining \eqref{xi1-xi2}--\eqref{xi6-xi7} yields $V^{k+1} - V^k - (D_1\|\bw^k\|^2+D_2\|\be^k\|^2)\leq 0$, and thus \eqref{tilde Vk+1-tilde Vk leq 0} holds.
\end{proof}

\subsection{Proof of Lemma~\ref{lemma: summable error}} \label{proof of lemma summable error}
According to the Laplace noises $\bw^k$ and $\be^k$, we have
\begin{align*}
    &\mathbb{E}[\sum_{k=0}^K(D_1\|\bw^k\|^2+D_2\|\be^k\|^2)]\notag\\
    =&\sum_{k=0}^K\mathbb{E}[D_1\|\bw^k\|^2]+\sum_{k=0}^K\mathbb{E}[D_2\|\be^k\|^2]\notag\\
    \leq&(D_1+D_2)\sum_{k=0}^\infty(2N\bar{r}^{2k}\bar{u}^2)\notag\\
    =&(D_1+D_2)\frac{2N\bar{u}^2}{1-\bar{r}^2},
\end{align*}
where $\bar{u} = \max_{i=1,\dots,N}\{u_{e,i},u_{w,i}\}$ and $\bar{r}=\max_{i=1,\dots,N}\{r_i\}$. 

\subsection{Proof of Theorem~\ref{theorem:nonconvex}}\label{proof DPP2 convergence}
First, we define
\begin{equation}\label{def hat V}
    \hat{V}^k = \|\bx^k\|^2_\bK+\|\bs^k\|^2_\bK+f(\bar{x}^k)-f^*.
\end{equation}
From the definition in \eqref{def V}, we obtain
\begin{align}\label{Vk geq hat Vk}
	V^k\geq &\frac{1}{2}\|\bx^k\|^2_\bK+\frac{1}{2}\underline{\lambda}_{\bG}(\theta+\frac{1}{\rho\bar{\lambda}_\bL})\|\bs^k\|^2_\bK-\frac{\zeta_1}{4}\|\bx^k\|^2_\bK-\frac{\theta}{4\zeta_1}\|\bs^k\|^2_{\bK}+f(\bar{x}^k)-f^*\notag\\
	=&(\frac{1}{2}-\frac{\zeta_1}{4})(\|\bx^k\|^2_\bK+\|\bs^k\|^2_\bK)+f(\bar{x}^k)-f^*\notag\\
    \geq& \zeta_3\hat{V}^k>f(\bar{x}^k)-f^*>0,
\end{align}
where $\zeta_1=1-c_1+\sqrt{(1-c_1)^2+\theta^2}$ with $c_1=\underline{\lambda}_{\bG}(\theta+\frac{1}{\rho\bar{\lambda}_\bL})$ and $\zeta_3=\frac{1}{2}-\frac{\zeta_1}{4}$. Since $\bar{c}_\theta<\frac{1}{\kappa_\bG}$ (c.f. \eqref{range c alpha gamma}), we obtain $c_1>\theta\underline{\lambda}_\bG=\theta\bar{\lambda}_\bG/\kappa_\bG>\theta \bar{c}_\theta\bar{\lambda}_\bG=\theta^2>\frac{1}{4}\theta^2$. This implies that $\zeta_1<1-c_1+\sqrt{c_1^2-2c_1+1+4c_1}=2$, and thus $\zeta_3>0$. 

Similarly, \eqref{def V} also implies that
\begin{align}\label{Vk leq hat Vk}
    V^k\leq &\frac{1}{2}\|\bx^k\|^2_\bK+\frac{1}{2}\bar{\lambda}_{\bG}(\theta+\frac{1}{\rho\underline{\lambda}_\bL})\|\bs^k\|^2_\bK+\frac{\zeta_2}{4}\|\bx^k\|^2_\bK+\frac{\theta}{4\zeta_2}\|\bs^k\|^2_{\bK}+f(\bar{x}^k)-f^*\notag\\
    =&(\frac{1}{2}+\frac{\zeta_2}{4})(\|\bx^k\|^2_\bK+\|\bs^k\|^2_\bK)+f(\bar{x}^k)-f^*\notag\\
    \leq& \zeta_4\hat{V}^k,
\end{align}
where $\zeta_2=1-c_2+\sqrt{(c_2-1)^2+\theta^2}$ with $c_2 = \bar{\lambda}_{\bG}(\theta+\frac{1}{\rho\underline{\lambda}_\bL})$ and $\zeta_4 = \max\{\frac{1}{2}+\frac{\zeta_2}{4},1\}$.

Subsequently, since \eqref{tilde Vk+1-tilde Vk leq 0} implies that
\begin{align*}
	&V^{k+1} - V^k - (D_1\|\bw^k\|^2+D_2\|\be^k\|^2)\leq - \|\mb{x}^k\|^2_{\bar{\lambda}_{\bG}(\xi_1-\xi_2\bar{\lambda}_{\bG})\mb{K}}-\alpha(\xi_6-\xi_7\alpha)\|\bar{\bg}^k\|^2, 
\end{align*}
summing this inequality from $k=0$ to $K$ yields
\begin{align}\label{xk+gkleq}
	&\sum_{k=0}^{K}(\bar{\lambda}_{\bG}(\xi_1-\xi_2\bar{\lambda}_{\bG})\|\mb{x}^k\|^2_\bK+\alpha(\xi_6-\xi_7\alpha)\|\bar{\bg}^k\|^2)\notag\\
	\leq&V^0-V^{K+1}+\sum_{k=0}^K(D_1\|\bw^k\|^2+D_2\|\be^k\|^2) \notag\\
	\overset{\eqref{Vk geq hat Vk}}{\leq}& V^0 +\sum_{k=0}^K(D_1\|\bw^k\|^2+D_2\|\be^k\|^2)\notag\\
    \overset{\eqref{Vk leq hat Vk}}{\leq}& \zeta_4\hat{V}^0+\sum_{k=0}^K(D_1\|\bw^k\|^2+D_2\|\be^k\|^2).
\end{align}

We rewrite the optimality gap in \eqref{def hatWk} as
\begin{equation}\label{redefine hatWk}
    \hat{W}^k := \|\mb{x}^{k}-\bar{\bx}^k\|^2+ \frac{1}{N}\|\sum_{i=1}^N \nabla {f}_i(x_i^k)\|^2 \overset{\eqref{def parameter}}{=} \|\bx^k\|^2_\bK+\|\bar{\bg}^k\|^2.
\end{equation}

Incorporating \eqref{D1w+D2e lemma} into \eqref{xk+gkleq} yields
\begin{align*}
    \sum_{k=0}^K\mathbb{E}[\hat{W}^k] \overset{\eqref{redefine hatWk}}{\leq} & \frac{1}{\zeta_5}\sum_{k=0}^{K}\mathbb{E}[\big(\bar{\lambda}_{\bG}(\xi_1-\xi_2\bar{\lambda}_{\bG})\|\mb{x}^k\|^2_\bK \\
    &+\alpha(\xi_6-\xi_7\alpha)\|\bar{\bg}^k\|^2\big)]\\
    \overset{\eqref{xk+gkleq}}{\leq} & \frac{1}{\zeta_5}(\zeta_4\hat{V}^0+\mathbb{E}[\sum_{k=0}^K(D_1\|\bw^k\|^2+D_2\|\be^k\|^2)])\\
    \overset{\eqref{D1w+D2e lemma}}{\leq} & \frac{1}{\zeta_5}(\zeta_4\hat{V}^0+(D_1+D_2)\frac{2N\bar{u}^2}{1-\bar{r}^2}),
\end{align*}
where $\zeta_5 = \min\{\bar{\lambda}_{\bG}(\xi_1-\xi_2\bar{\lambda}_{\bG}),\alpha(\xi_6-\xi_7\alpha)\}$ and $\hat{V}^0=\|\bx^0-\bar{\bx}^0\|^2+\frac{1}{N}\|\sum_{i=1}^N \nabla {f}_i(x_i^0)\|^2+f(\bar{x}^0)-f^*$. Hence, we prove Theorem~\ref{theorem:nonconvex}.



\subsection{Proof of Theorem~\ref{theorem:PL}}\label{proof theorem PL}
It follows from \eqref{tilde Vk+1-tilde Vk leq 0} that 
\begin{align}\label{Vk+1 -Vk -wk-ek leq zeta Vk}
    &V^{k+1} - V^k - (D_1\|\bw^k\|^2+D_2\|\be^k\|^2)\notag\\
        \leq & - \|\mb{x}^k\|^2_{\bar{\lambda}_{\bG}(\xi_1-\xi_2\bar{\lambda}_{\bG})\mb{K} }- \|\mb{s}^k \|^2_{\bar{\lambda}_\bG^2(\xi_3-\xi_4 \bar{\lambda}_\bG-\xi_5\bar{\lambda}_\bG^2)\bK}-\frac{\alpha}{8}\|\bar{\bg}_a^k\|^2\notag\\
        \overset{\eqref{def pl}}{\leq}& - \|\mb{x}^k\|^2_{\bar{\lambda}_{\bG}(\xi_1-\xi_2\bar{\lambda}_{\bG})\mb{K} }- \|\mb{s}^k \|^2_{\bar{\lambda}_\bG^2(\xi_3-\xi_4 \bar{\lambda}_\bG-\xi_5\bar{\lambda}_\bG^2)\bK} -\frac{\alpha\nu N}{4}(f(\bar{x}^k)-f^*)\notag\\
        \overset{\eqref{range zeta 6}\eqref{def hat V}}{<} &-\zeta_6\hat{V}^k \notag\\
        \overset{\eqref{Vk leq hat Vk}}{\leq}& -\frac{\zeta_6}{\zeta_4}V^k\overset{\eqref{range zeta}}{=}-\zeta V^k,
\end{align}
where $\zeta_6=\min\{\bar{\lambda}_\bG(\xi_1-\xi_2\bar{\lambda}_\bG),\bar{\lambda}_\bG^2(\xi_3-\xi_4\bar{\lambda}_\bG-\xi_5\bar{\lambda}_\bG^2),\frac{\alpha\nu N}{4},\zeta_4(1-\bar{r}^2)\}.$
This implies that
\begin{align}\label{Vk+1 leq 1-zetak+1}
    &\mathbb{E}[V^{k+1}] \notag\\
    \leq & (1-\zeta) \mathbb{E}[V^k]+\mathbb{E}[(D_1\|\bw^k\|^2+D_2\|\be^k\|^2)]\notag\\
    \leq & (1\!-\!\zeta)^{k+1}\mathbb{E}[V^0] \!+\! \sum_{t=0}^k(1\!-\!\zeta)^{k-t}\mathbb{E}[(D_1\|\bw^t\|^2\!+\!D_2\|\be^t\|^2)]\notag\\
    \leq & (1-\zeta)^{k+1}(V^0+2N(D_1+D_2)\bar{u}^2\sum_{t=0}^k(1-\zeta)^{-t-1}\bar{r}^2t)\notag\\
    \leq & (1-\zeta)^{k+1}\Big(V^0+\frac{2N(D_1+D_2)\bar{u}^2(1-\frac{\bar{r}^2}{1-\zeta})^k}{1-\zeta-\bar{r}^2}\Big)\notag\\
    \overset{\eqref{Vk leq hat Vk}}{\leq}& (1-\zeta)^{k+1}\Big(\zeta_4\hat{V}^0+\frac{2N(D_1+D_2)\bar{u}^2}{1-\zeta-\bar{r}^2}\Big),
\end{align}
where the forth inequality holds since $1-\zeta\overset{\eqref{range zeta}}{=}1-\frac{\zeta_6}{\zeta_4}\overset{\eqref{range zeta 6}}{>}\bar{r}^2$, which also implies $1-\zeta-\bar{r}^2>0$, and thus the right-hand side of \eqref{Vk+1 leq 1-zetak+1} is positive.
It then follows from \eqref{Vk geq hat Vk} that
\begin{align*}
    &\mathbb{E}[\|\bx^k-\bar{\bx}^k\|^2+f(\bar{x}^k)-f^*]\notag\\
    \overset{\eqref{def hat V}}{\leq}&  \mathbb{E}[\hat{V}^k] \overset{\eqref{Vk geq hat Vk}}{\leq} \frac{1}{\zeta_3}\mathbb{E}[V^k]\notag\\
    \overset{\eqref{Vk+1 leq 1-zetak+1}}{\leq} & (1-\zeta)^{k}\frac{1}{\zeta_3}\Big(\zeta_4\hat{V}^0+\frac{2N(D_1+D_2)\bar{u}^2}{1-\zeta-\bar{r}^2}\Big).\notag
\end{align*}

Hence, we obtain Theorem~\ref{theorem:PL}.


\section{Proof of Differential Privacy}\label{proof of DP}

\subsection{Proof of Theorem~\ref{theorem:dp}}\label{section:appendix proof of dp}

Given that the sequence $\{\eta^k\}$ is predetermined as an input to Algorithm~\ref{algrithm DPP2}, the observation sequence $\mathcal{O}=\{\by^k,\bz^k\}_k$ is entirely determined by the noise sequences $\{\be^k\}_k$ and $\{\bw^k\}_k$. Considering the observations $\mathcal{O}^{(1)}=\{\by^{(1)k},\bz^{(1)k}\}_k$, $\mathcal{O}^{(2)}=\{\by^{(2)k},\bz^{(2)k}\}_k$ are the same, i.e., $\mathcal{O}^{(1)}=\mathcal{O}^{(2)}\in\mathcal{O}$, the dual variables $\{\bd^{(1)k},\bq^{(1)k}\}$ and $\{\bd^{(2)k},\bq^{(2)k}\}$ are completely identical as long as the initial values $\{\bd^0,\bq^0\}$ and the observable variables $\by^k$ are the same. From \eqref{yk new}, we obtain 
\begin{equation}\label{ei0k wi0k}
	\Delta e_{i_0}^k = -\Delta g_{i_0}^k, \quad \Delta w_{i_0}^k = -\Delta x_{i_0}^k,
\end{equation}
where we define
\begin{align*}
    \Delta e_{i_0}^k =& e_{i_0}^{(1)k}-e_{i_0}^{(2)k},\notag\\
    \Delta w_{i_0}^k =& w_{i_0}^{(1)k}-w_{i_0}^{(2)k},\\
    \Delta x_{i_0}^k =& x_{i_0}^{(1)k}-x_{i_0}^{(2)k},\notag\\
    \Delta g_{i_0}^k =& \nabla f_{i_0}^{(1)}(x_{i_0}^{(1)k})-\nabla f_{i_0}^{(2)}(x_{i_0}^{(2)k}).
\end{align*}
From \eqref{ei0k wi0k}, we obtain
\begin{align} \label{delta xi0k+1}
	\|\Delta x_{i_0}^{k+1}\|\overset{\eqref{xk+1 new}}{=}&\alpha\|\nabla f_{i_0}^{(1)}(x_{i_0}^{(1)k})-\nabla f_{i_0}^{(2)}(x_{i_0}^{(2)k})\|\notag\\
	= & \alpha \|\nabla f_{i_0}^{(1)}(x_{i_0}^{(1)k})-\nabla f_{i_0}^{(2)}(x_{i_0}^{(1)k}) + \nabla f_{i_0}^{(2)}(x_{i_0}^{(1)k}) - \nabla f_{i_0}^{(2)}(x_{i_0}^{(2)k})\|\notag\\
	\leq & \alpha (\|\nabla f_{i_0}^{(1)}(x_{i_0}^{(1)k})-\nabla f_{i_0}^{(2)}(x_{i_0}^{(1)k})\| + \|\nabla f_{i_0}^{(2)}(x_{i_0}^{(1)k}) - \nabla f_{i_0}^{(2)}(x_{i_0}^{(2)k})\|)\notag\\
	\overset{\eqref{ineq: def delta adjacency}\eqref{smooth tilde f}}{\leq} & \alpha(\delta+\bar{M}\|\Delta x_{i_0}^{k}\|)\notag\\
	=& \sum_{t=1}^{k+1} \alpha (\alpha\bar{M})^{t-1}\delta+(\alpha \bar{M})^{k+1}\|\Delta x_{i_0}^0\|\notag\\
	=&\frac{\alpha \delta (1-(\alpha \bar{M})^{k+1})}{1-\alpha \bar{M}}.
\end{align}
The relations in \eqref{ei0k wi0k} also imply that
\begin{align} \label{Delta wi0k=alpha Delta ei0k-1}
	\Delta w_{i_0}^k &= -\Delta x_{i_0}^k \overset{\eqref{xk+1 new}}{=} -\alpha(\nabla f_{i_0}^{(1)}(x_{i_0}^{(1)k-1})-\nabla f_{i_0}^{(2)}(x_{i_0}^{(2)k-1})) =- \alpha \Delta g_{i_0}^{k-1}=\alpha \Delta e_{i_0}^{k-1}.
\end{align}
Combining \eqref{Delta wi0k=alpha Delta ei0k-1} with \eqref{delta xi0k+1} yields
\begin{align}
	\|\Delta w_{i_0}^k\|=&\|\Delta x_{i_0}^k\|\leq \frac{\alpha \delta (1-(\alpha \bar{M})^{k-1})}{1-\alpha \bar{M}},\label{Delta wi0k}\\
	\|\Delta e_{i_0}^k\|=&\frac{1}{\alpha}\|\Delta x_{i_0}^{k+1}\|\leq \frac{ \delta (1-(\alpha \bar{M})^{k})}{1-\alpha \bar{M}}\label{Delta ei0k}.
\end{align}

We conclude that the update of $\bx^{k+1}$ in \eqref{xk+1 new} only depends on the noises $\bw^k$, $\be^k$ and the initialization $\bx^0,\bd^0,\by^0$ as well as the communication network represented by $\bL$. Hence, for a given observation $\mathcal{O}$, the objective functions and noise sequences share a bijective map. Here, we use function $\mathcal{R}_{\mathcal{F}}$ to denote the relation $\mathcal{O}^{(h)}=\mathcal{R}_{\mathcal{F}^{(h)}}(\be,\bw), h=1,2$, where $\mathcal{F}^{(h)}=\{\bx^0,\bd^0,\by^0,\bL,F^{(h)}\}$. Also, we denote $\mathcal{C}^{(1)}\triangleq\{\be^{(1)k},\bw^{(1)k}\}_{k=1}^K$ and $\mathcal{C}^{(2)}\triangleq\{\be^{(2)k},\be^{(2)k}\}_{k=1}^K$ such that $\mathcal{R}^{-1}_{\mathcal{F}^{(1)}}(\mathcal{O})\in\mathcal{C}^{(1)}$ and $\mathcal{R}^{-1}_{\mathcal{F}^{(2)}}(\mathcal{O})\in\mathcal{C}^{(2)}$. According to Definition~\ref{def DP}, we have 
\begin{align}\label{PF1/PF2}
	\frac{\mathbb{P}(F^{(1)}|\mathcal{O})}{\mathbb{P}(F^{(2)}|\mathcal{O})}=&\frac{\mathbb{P}(\mathcal{R}^{-1}_{\mathcal{F}^{(1)}}(\mathcal{O})|\mathcal{O})}{\mathbb{P}(\mathcal{R}^{-1}_{\mathcal{F}^{(2)}}(\mathcal{O})|\mathcal{O})}\notag\\
    =&\frac{\mathbb{P}(\{\be^{(1)},\bw^{(1)}\}|\{\be^{(1)},\bw^{(1)}\}\in \mathcal{C}^{(1)})}{\mathbb{P}(\{\be^{(2)},\bw^{(2)}\}|\{\be^{(2)},\bw^{(2)}\}\in \mathcal{C}^{(2)})}\notag\\
    =&\frac{\iint_{\mathcal{C}^{(1)}}f_{\be\bw}(\be^{(1)},\bw^{(1)})d\be^{(1)}d\bw^{(1)}}{\iint_{\mathcal{C}^{(2)}}f_{\be\bw}(\be^{(2)},\bw^{(2)})d\be^{(2)}d\bw^{(2)}},
\end{align}
where we define 
	$f_{\be\bw}(\be^{(h)},\bw^{(h)})=\Pi_{i=1}^{n}\Pi_{k=1}^{K}f_{L}(e_i^{(h)k},\theta_{i,k}^e)f_{L}(w_i^{(h)k},\theta_{i,k}^e), h=1,2$.

With $F^{(1)}$ and $F^{(2)}$ only differ from $f_{i_0}$, it then follows from \eqref{PF1/PF2} that
\begin{align}
	&\frac{\mathbb{P}(F^{(1)}|\mathcal{O})}{\mathbb{P}(F^{(2)}|\mathcal{O})}  = \Pi_{k=1}^{K}\frac{f_{L}(e_{i_0}^{(1)k},\theta_{i_0,k}^e)f_{L}(w_{i_0}^{(1)k},\theta_{i_0,k}^w)}{f_{L}(e_{i_0}^{(2)k},\theta_{i_0,k}^e)f_{L}(w_{i_0}^{(2)k},\theta_{i_0,k}^w)}\notag\\
	  &\leq\Pi_{k=1}^K e^{\frac{\sqrt{d}\|\Delta e_{i_0}^k\|_1}{u_{e,{i_0}}r_{i_0}^k}}\Pi_{k=1}^K e^{\frac{\sqrt{d}\|\Delta w_{i_0}^k\|_1}{u_{w,{i_0}}r_{i_0}^k}}\notag\\
	&\underset{\eqref{Delta ei0k}}{\overset{\eqref{Delta wi0k}}{\leq}} e^{\sum_{k=1}^K \sqrt{d}\Big(\frac{1-(\alpha \bar{M})^k}{\alpha u_{e,{i_0}}}+\frac{1-(\alpha \bar{M})^{k-1}}{\alpha u_{w,{i_0}}}\Big)\frac{\alpha \delta}{r_{i_0}^k(1-\alpha \bar{M})}}\notag\\
	&= e^{\sum_{k=1}^K \!\sqrt{d}\Big(\!(\frac{1}{\alpha u_{e,{i_0}}}\!+\!\frac{1}{u_{w,{i_0}}})-(\frac{1}{\alpha u_{e,{i_0}}}+\frac{1}{\alpha \bar{M}u_{w,{i_0}}})(\alpha\bar{M})^k\!\Big)\frac{\alpha \delta}{r_{i_0}^k(1\!-\!\alpha \bar{M})}}\notag\\
    &\leq e^{\sum_{k=1}^K \sqrt{d}\Big(\frac{1}{\alpha u_{e,{i_0}}}+\frac{1}{u_{w,{i_0}}}\Big)\frac{\alpha \delta}{r_{i_0}^k(1-\alpha \bar{M})}}.\notag
\end{align}
Comparing the inequality above and the definition in Definition~\ref{def DP} yields Theorem~\ref{theorem:dp}.

\subsection{Proof of Corollary~\ref{corollary:dp parameters}}
The condition in Theorem~\ref{theorem:dp} is written as 
    $$
    \underbrace{\sqrt{d}(\frac{1}{\alpha u_{e,i_0}}+\frac{1}{u_{w,i_0}})\frac{\alpha \delta}{1-\alpha \bar{M}}}_{\tilde{c}}\sum_{k=1}^K\underbrace{\frac{1}{r_{i_{0}}^k}}_{p_{i_0}^k:=\frac{1}{r_{i_{0}}^k}}\leq \epsilon_{i_0}.
    $$
    By summing $p_{i_0}^k$ from 1 to $K$, the above condition becomes 
    \begin{equation*}
        \frac{p_{i_0}(1-p_{i_0}^K)}{1-p_{i_0}}\leq\frac{\epsilon_{i_0}}{\tilde{c}},
    \end{equation*}    
    where $p_{i_0}>1$. We rewrite this as 
    \begin{equation*}
        p_{i_0}^K-(1+\frac{\epsilon_{i_0}}{\tilde{c}})p_{i_0}+\frac{\epsilon_{i_0}}{\tilde{c}}\leq 0.
    \end{equation*}
    Let $1<p_{i_0}<\frac{\epsilon_{i_0}}{\tilde{c}}$, the above condition can be satisfied by a more strict condition as 
    \begin{equation*}
        p_{i_0}^K-\frac{\epsilon_{i_0}}{\tilde{c}}p_{i_0} \leq 0.
    \end{equation*} 
    This implies that $p_{i_0}\in (1,(\frac{\epsilon_{i_0}}{\tilde{c}})^{\frac{1}{K-1}})$ with $\frac{\epsilon_{i_0}}{\tilde{c}}>1$, which is satisfied by $\alpha<(\epsilon_{i_0}-\frac{\sqrt{d}\bar{M}}{u_{e,{i_0}}})/[\delta(\frac{\sqrt{d}}{u_{w,{i_0}}}+\epsilon_{i_0})]$ with $u_{e,{i_0}}>\frac{\sqrt{d}\bar{M}}{\epsilon_{i_0}}$. Thus, we can find an $r_{i_0}\in ((\frac{\tilde{c}}{\epsilon_{i_0}})^{\frac{1}{K-1}},1)$.

			\bibliographystyle{IEEEtran}
			\bibliography{reference} 

\begin{thebibliography}{10}
\providecommand{\url}[1]{#1}
\csname url@samestyle\endcsname
\providecommand{\newblock}{\relax}
\providecommand{\bibinfo}[2]{#2}
\providecommand{\BIBentrySTDinterwordspacing}{\spaceskip=0pt\relax}
\providecommand{\BIBentryALTinterwordstretchfactor}{4}
\providecommand{\BIBentryALTinterwordspacing}{\spaceskip=\fontdimen2\font plus
\BIBentryALTinterwordstretchfactor\fontdimen3\font minus
  \fontdimen4\font\relax}
\providecommand{\BIBforeignlanguage}[2]{{%
\expandafter\ifx\csname l@#1\endcsname\relax
\typeout{** WARNING: IEEEtran.bst: No hyphenation pattern has been}%
\typeout{** loaded for the language `#1'. Using the pattern for}%
\typeout{** the default language instead.}%
\else
\language=\csname l@#1\endcsname
\fi
#2}}
\providecommand{\BIBdecl}{\relax}
\BIBdecl

\bibitem{molzahn2017survey}
D.~K. Molzahn, F.~D{\"o}rfler, H.~Sandberg, S.~H. Low, S.~Chakrabarti,
  R.~Baldick, and J.~Lavaei, ``A survey of distributed optimization and control
  algorithms for electric power systems,'' \emph{IEEE Transactions on Smart
  Grid}, vol.~8, no.~6, pp. 2941--2962, 2017.

\bibitem{Wai2015ACD}
H.-T. Wai, T.-H. Chang, and A.~Scaglione, ``A consensus-based decentralized
  algorithm for non-convex optimization with application to dictionary
  learning,'' in \emph{2015 IEEE International Conference on Acoustics, Speech
  and Signal Processing}, 2015, pp. 3546--3550.

\bibitem{lee2019deep}
H.~Lee, S.~H. Lee, and T.~Q. Quek, ``Deep learning for distributed
  optimization: Applications to wireless resource management,'' \emph{IEEE
  Journal on Selected Areas in Communications}, vol.~37, no.~10, pp.
  2251--2266, 2019.

\bibitem{zeng_nonconvex_2018}
J.~Zeng and W.~Yin, ``On {Nonconvex} {Decentralized} {Gradient} {Descent},''
  \emph{IEEE Transactions on Signal Processing}, vol.~66, no.~11, pp.
  2834--2848, Jun. 2018.

\bibitem{di2016next}
P.~Di~Lorenzo and G.~Scutari, ``Next: In-network nonconvex optimization,''
  \emph{IEEE Transactions on Signal and Information Processing over Networks},
  vol.~2, no.~2, pp. 120--136, 2016.

\bibitem{sun2016sonata}
Y.~Sun, G.~Scutari, and D.~Palomar, ``Distributed nonconvex multiagent
  optimization over time-varying networks,'' in \emph{2016 50th Asilomar
  Conference on Signals, Systems and Computers}.\hskip 1em plus 0.5em minus
  0.4em\relax IEEE, 2016, pp. 788--794.

\bibitem{hong2016convergence}
M.~Hong, Z.-Q. Luo, and M.~Razaviyayn, ``Convergence analysis of alternating
  direction method of multipliers for a family of nonconvex problems,''
  \emph{SIAM Journal on Optimization}, vol.~26, no.~1, pp. 337--364, 2016.

\bibitem{mancino2023decentralized}
G.~Mancino-Ball, Y.~Xu, and J.~Chen, ``A decentralized primal-dual framework
  for non-convex smooth consensus optimization,'' \emph{IEEE Transactions on
  Signal Processing}, vol.~71, pp. 525--538, 2023.

\bibitem{yi2022sublinear}
X.~Yi, S.~Zhang, T.~Yang, T.~Chai, and K.~H. Johansson, ``Sublinear and linear
  convergence of modified admm for distributed nonconvex optimization,''
  \emph{IEEE Transactions on Control of Network Systems}, vol.~10, no.~1, pp.
  75--86, 2023.

\bibitem{sun2018distributed}
H.~Sun and M.~Hong, ``Distributed non-convex first-order optimization and
  information processing: Lower complexity bounds and rate optimal
  algorithms,'' \emph{arXiv preprint arXiv:1804.02729}, 2018.

\bibitem{sun2019distributed}
------, ``Distributed non-convex first-order optimization and information
  processing: Lower complexity bounds and rate optimal algorithms,'' \emph{IEEE
  Transactions on Signal processing}, vol.~67, no.~22, pp. 5912--5928, 2019.

\bibitem{hong2017prox}
M.~Hong, D.~Hajinezhad, and M.-M. Zhao, ``Prox-{PDA}: The proximal primal-dual
  algorithm for fast distributed nonconvex optimization and learning over
  networks,'' in \emph{Proceedings of the 34th International Conference on
  Machine Learning}, vol.~70, 2017, pp. 1529--1538.

\bibitem{alghunaim2022unified}
S.~A. Alghunaim and K.~Yuan, ``A unified and refined convergence analysis for
  non-convex decentralized learning,'' \emph{IEEE Transactions on Signal
  Processing}, vol.~70, pp. 3264--3279, 2022.

\bibitem{yi2021linear}
X.~Yi, S.~Zhang, T.~Yang, T.~Chai, and K.~H. Johansson, ``Linear convergence of
  first-and zeroth-order primal--dual algorithms for distributed nonconvex
  optimization,'' \emph{IEEE Transactions on Automatic Control}, vol.~67,
  no.~8, pp. 4194--4201, 2021.

\bibitem{nedic2009distributed}
A.~Nedic and A.~Ozdaglar, ``Distributed subgradient methods for multi-agent
  optimization,'' \emph{IEEE Transactions on Automatic Control}, vol.~54,
  no.~1, pp. 48--61, 2009.

\bibitem{KIM201495}
S.~Kim, M.~K. Sung, and Y.~D. Chung, ``A framework to preserve the privacy of
  electronic health data streams,'' \emph{Journal of Biomedical Informatics},
  vol.~50, pp. 95--106, 2014, special Issue on Informatics Methods in Medical
  Privacy.

\bibitem{Zhang2019TIFS}
C.~Zhang, M.~Ahmad, and Y.~Wang, ``Admm based privacy-preserving decentralized
  optimization,'' \emph{IEEE Transactions on Information Forensics and
  Security}, vol.~14, no.~3, pp. 565--580, 2019.

\bibitem{Zhang2017ADMMBP}
------, ``Admm based privacy-preserving decentralized optimization,''
  \emph{IEEE Transactions on Information Forensics and Security}, vol.~14, pp.
  565--580, 2017.

\bibitem{Carlini2018TheSS}
N.~Carlini, C.~Liu, {\'U}.~Erlingsson, J.~Kos, and D.~X. Song, ``The secret
  sharer: Evaluating and testing unintended memorization in neural networks,''
  in \emph{USENIX Security Symposium}, 2018.

\bibitem{zhu2024privsgp}
Z.~Zhu, Y.~Huang, X.~Wang, and J.~Xu, ``Privsgp-vr: differentially private
  variance-reduced stochastic gradient push with tight utility bounds,''
  \emph{arXiv preprint arXiv:2405.02638}, 2024.

\bibitem{DIFF2}
T.~Murata and T.~Suzuki, ``{DIFF}2: Differential private optimization via
  gradient differences for nonconvex distributed learning,'' in
  \emph{Proceedings of the 40th International Conference on Machine Learning},
  ser. Proceedings of Machine Learning Research, A.~Krause, E.~Brunskill,
  K.~Cho, B.~Engelhardt, S.~Sabato, and J.~Scarlett, Eds., vol. 202.\hskip 1em
  plus 0.5em minus 0.4em\relax PMLR, 23--29 Jul 2023, pp. 25\,523--25\,548.

\bibitem{wang2023decentralized}
Y.~Wang and T.~Ba{\c{s}}ar, ``Decentralized nonconvex optimization with
  guaranteed privacy and accuracy,'' \emph{Automatica}, vol. 150, p. 110858,
  2023.

\bibitem{wang2024robust}
Y.~Wang and A.~Nedić, ``Robust constrained consensus and
  inequality-constrained distributed optimization with guaranteed differential
  privacy and accurate convergence,'' \emph{IEEE Transactions on Automatic
  Control}, vol.~69, no.~11, pp. 7463--7478, 2024.

\bibitem{Huang2015Diff}
Z.~Huang, S.~Mitra, and N.~Vaidya, ``Differentially private distributed
  optimization,'' in \emph{Proceedings of the 16th International Conference on
  Distributed Computing and Networking}, ser. ICDCN '15.\hskip 1em plus 0.5em
  minus 0.4em\relax New York, NY, USA: Association for Computing Machinery,
  2015.

\bibitem{Ding2022TAC}
T.~Ding, S.~Zhu, J.~He, C.~Chen, and X.~Guan, ``Differentially private
  distributed optimization via state and direction perturbation in multiagent
  systems,'' \emph{IEEE Transactions on Automatic Control}, vol.~67, no.~2, pp.
  722--737, 2022.

\bibitem{Yuan2024DistributedNG}
Y.~Yuan and W.~He, ``Distributed nesterov gradient for differentially private
  optimization with exact convergence,'' \emph{IECON 2024 - 50th Annual
  Conference of the IEEE Industrial Electronics Society}, pp. 1--6, 2024.

\bibitem{XIE2025112338}
A.~Xie, X.~Yi, X.~Wang, M.~Cao, and X.~Ren, ``Differentially private and
  communication-efficient distributed nonconvex optimization algorithms,''
  \emph{Automatica}, vol. 177, p. 112338, 2025.

\bibitem{wang2022decentralized}
Y.~Wang and H.~V. Poor, ``Decentralized stochastic optimization with inherent
  privacy protection,'' \emph{IEEE Transactions on Automatic Control}, vol.~68,
  no.~4, pp. 2293--2308, 2022.

\bibitem{mironov2017renyi}
I.~Mironov, ``R{\'e}nyi differential privacy,'' in \emph{2017 IEEE 30th
  computer security foundations symposium (CSF)}.\hskip 1em plus 0.5em minus
  0.4em\relax IEEE, 2017, pp. 263--275.

\bibitem{xie_compressed_2023}
A.~Xie, X.~Yi, X.~Wang, M.~Cao, and X.~Ren, ``Compressed differentially private
  distributed optimization with linear convergence,'' \emph{IFAC-PapersOnLine},
  vol.~56, no.~2, pp. 8369--8374, 2023.

\bibitem{huang_differential_2024}
L.~Huang, J.~Wu, D.~Shi, S.~Dey, and L.~Shi, ``Differential {Privacy} in
  {Distributed} {Optimization} {With} {Gradient} {Tracking},'' \emph{IEEE
  Transactions on Automatic Control}, vol.~69, no.~9, pp. 5727--5742, Sep.
  2024.

\bibitem{gade2018privacy}
S.~Gade and N.~H. Vaidya, ``Privacy-preserving distributed learning via
  obfuscated stochastic gradients,'' in \emph{2018 IEEE Conference on Decision
  and Control (CDC)}.\hskip 1em plus 0.5em minus 0.4em\relax IEEE, 2018, pp.
  184--191.

\bibitem{lou2017privacy}
Y.~Lou, L.~Yu, S.~Wang, and P.~Yi, ``Privacy preservation in distributed
  subgradient optimization algorithms,'' \emph{IEEE transactions on
  cybernetics}, vol.~48, no.~7, pp. 2154--2165, 2017.

\bibitem{DingTie2018CDC}
T.~Ding, S.~Zhu, J.~He, C.~Chen, and X.~Guan, ``Consensus-based distributed
  optimization in multi-agent systems: Convergence and differential privacy,''
  in \emph{2018 IEEE Conference on Decision and Control (CDC)}, 2018, pp.
  3409--3414.

\bibitem{mokhtari2016decentralized}
A.~Mokhtari, W.~Shi, Q.~Ling, and A.~Ribeiro, ``A decentralized second-order
  method with exact linear convergence rate for consensus optimization,''
  \emph{IEEE Transactions on Signal and Information Processing over Networks},
  vol.~2, no.~4, pp. 507--522, 2016.

\bibitem{scaman2017optimal}
K.~Scaman, F.~Bach, S.~Bubeck, Y.~T. Lee, and L.~Massouli{\'e}, ``Optimal
  algorithms for smooth and strongly convex distributed optimization in
  networks,'' in \emph{Proceedings of the 34th International Conference on
  Machine Learning}, vol.~70, 2017, pp. 3027--3036.

\bibitem{Wu2020AUA}
X.~Wu and J.~Lu, ``A unifying approximate method of multipliers for distributed
  composite optimization,'' \emph{IEEE Transactions on Automatic Control},
  vol.~68, no.~4, pp. 2154--2169, 2023.

\bibitem{antoniadis2011penalized}
A.~Antoniadis, I.~Gijbels, and M.~Nikolova, ``Penalized likelihood regression
  for generalized linear models with non-quadratic penalties.'' \emph{Annals of
  the Institute of Statistical Mathematics}, vol.~63, no.~3, 2011.

\end{thebibliography}
			
		\begin{IEEEbiography}
			[{\includegraphics[width=1in,height=1.25in,clip,keepaspectratio]{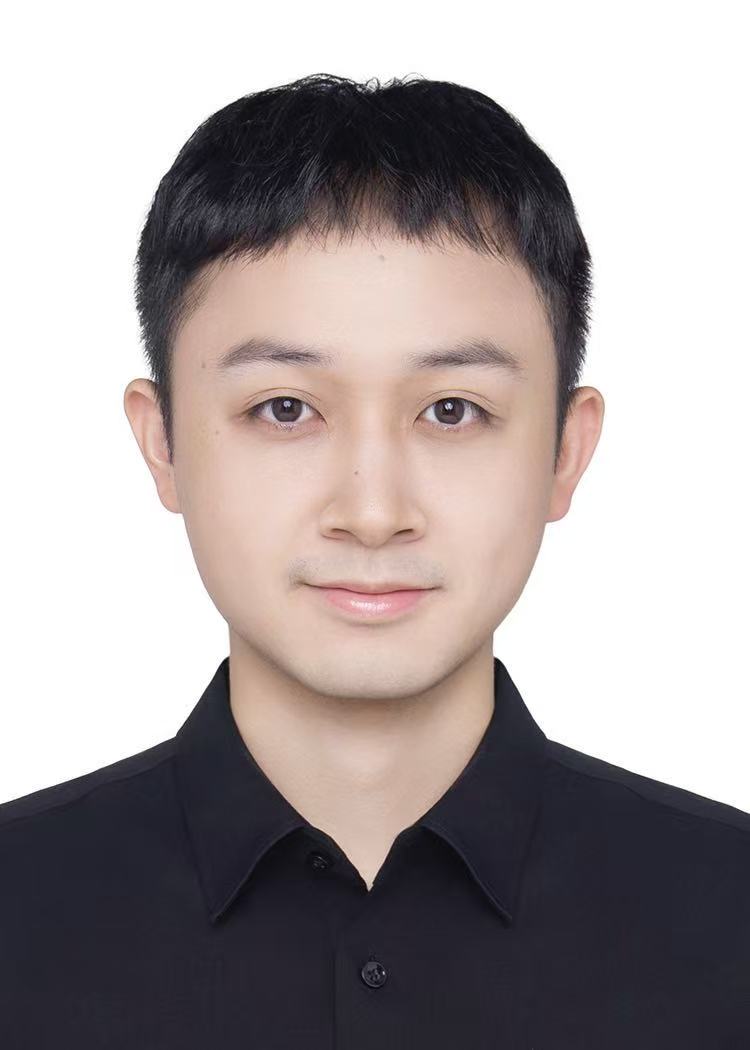}}]{Zichong Ou} received the B.S. degree in Measurement and Control Technology and Instrument from Northwestern Polytechnical University, Xi'an, China, in 2020. He is now pursuing the Ph.D degree from the School of Information Science and Technology at ShanghaiTech University, Shanghai, China. His research interests include distributed optimization, large-scale optimization, and their applications in IoT and machine learning.
		\end{IEEEbiography}
	
	\begin{IEEEbiography}[{\includegraphics[width=1.1in,height=1.22in,clip]{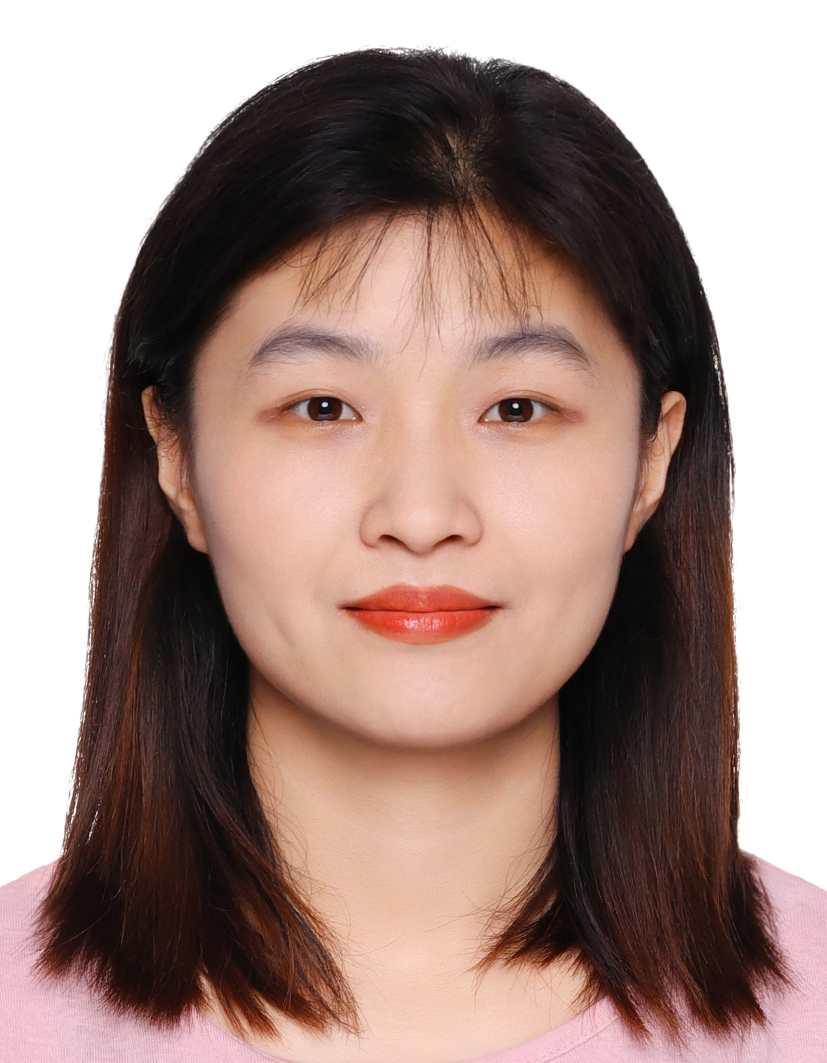}}]{Dandan Wang} received the B.S. degree in Information and Communication Engineering from Donghua University, Shanghai, China, in 2018. She is currently pursing her Ph. D. degree in the School of Information Science and Technology at ShanghaiTech University, Shanghai, China. Her research interests include distributed optimization, online optimization, and their applications in wireless networks.
	\end{IEEEbiography}

	\begin{IEEEbiography}
	[{\includegraphics[width=1in,height=1.25in,clip,keepaspectratio]{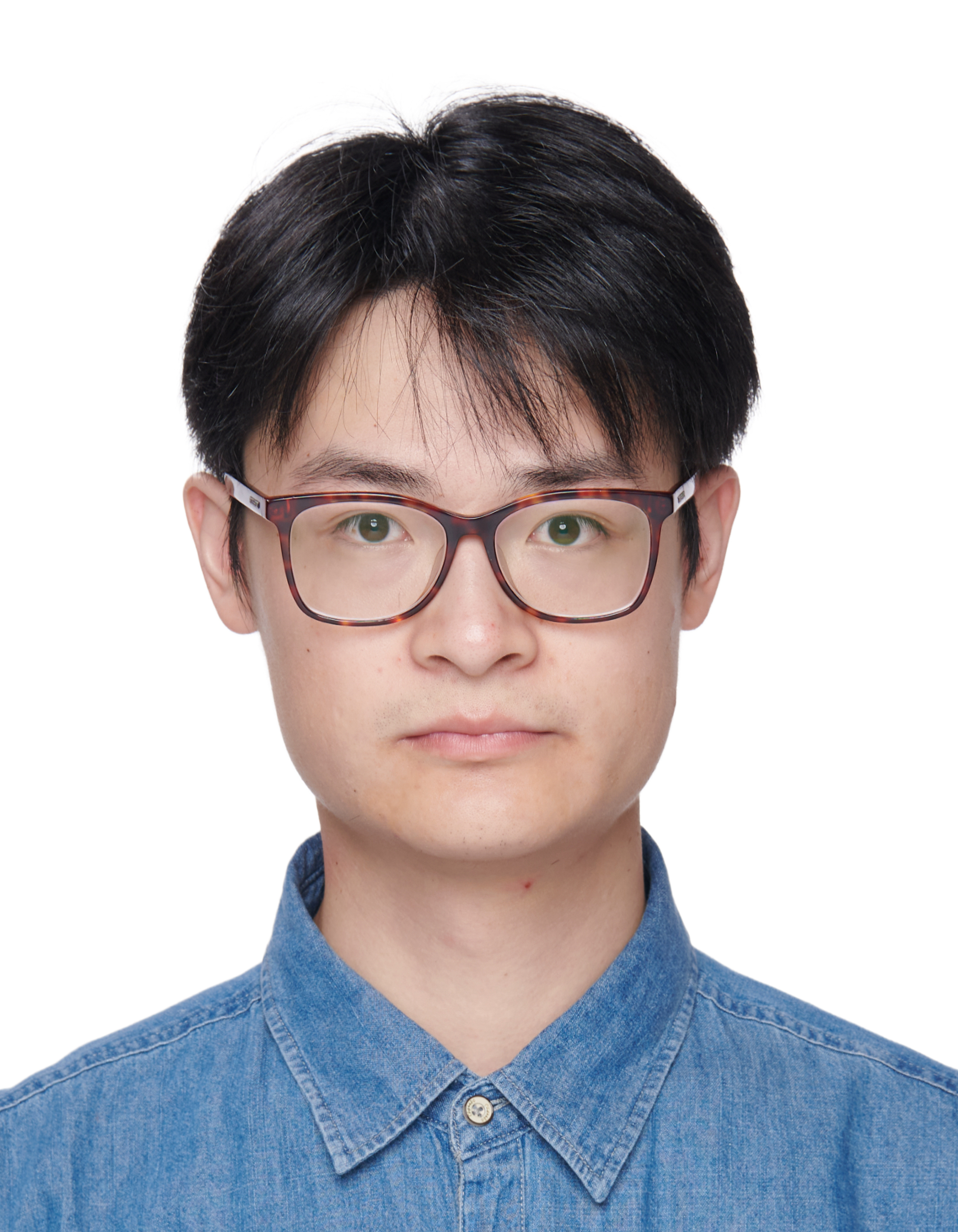}}]{Zixuan Liu} received the B.E. and M.S.E. degrees in computer science and technology from ShanghaiTech University, Shanghai, China, in 2022 and 2025, respectively. He is currently working toward the Ph.D. degree in systems and control with the Engineering and Technology Institute (ENTEG), University of Groningen, Groningen, the Netherlands. His research interests include distributed optimization and multi-agent decision making.
\end{IEEEbiography}
		
		\begin{IEEEbiography}
			[{\includegraphics[width=1in,height=1.25in,clip,keepaspectratio]{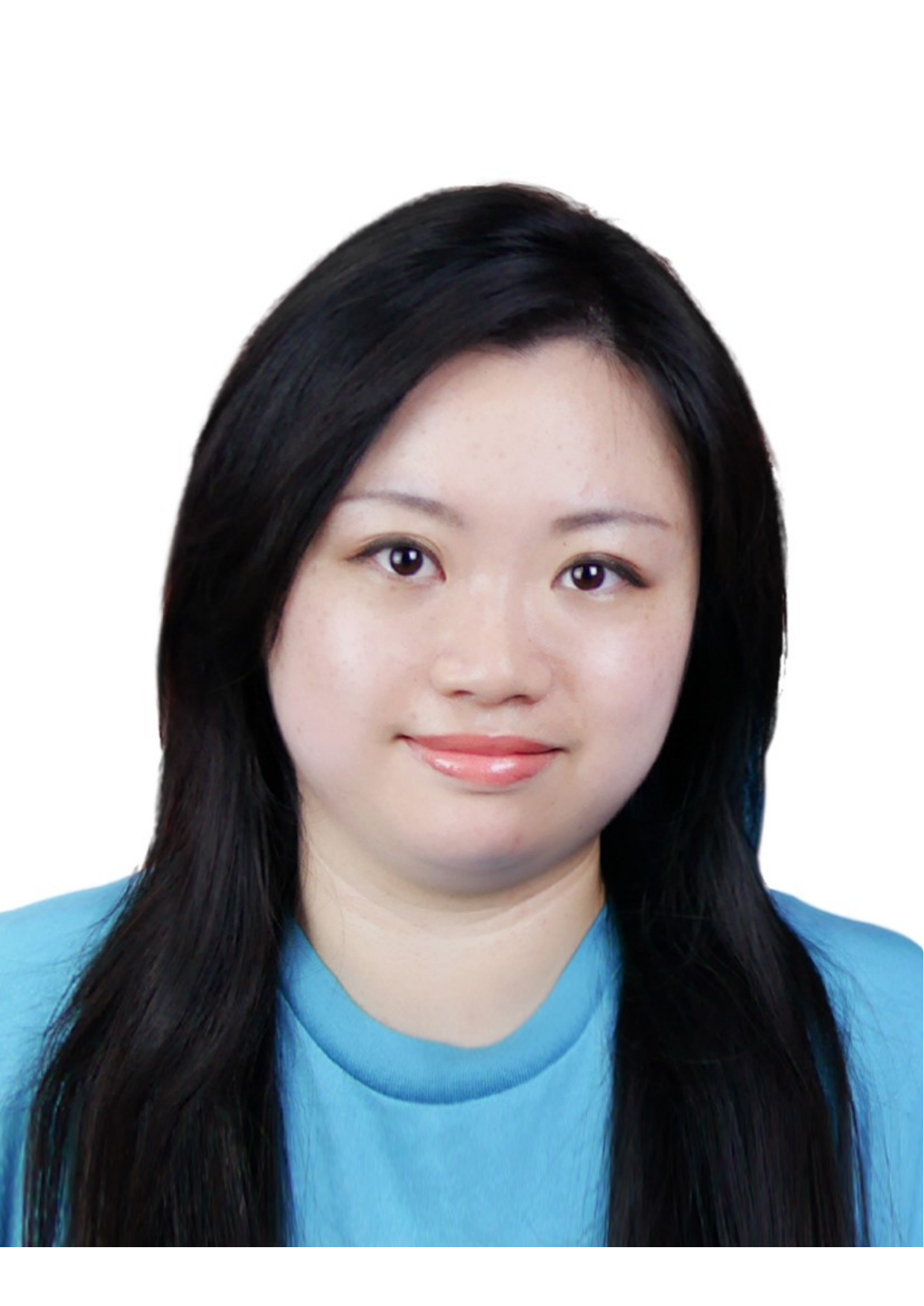}}]
			{Jie Lu} (Member, IEEE) received the B.S. degree in information engineering from Shanghai Jiao Tong University, Shanghai, China, in 2007, and the Ph.D. degree in electrical and computer engineering from the University of Oklahoma, Norman, OK, USA, in 2011. 
			She is currently an Associate Professor with the School of Information Science and Technology, ShanghaiTech University, Shanghai, China. Before she joined ShanghaiTech University in 2015, she was a Postdoctoral Researcher with the KTH Royal Institute of Technology, Stockholm, Sweden, and with the Chalmers University of Technology, Gothenburg, Sweden from 2012 to 2015. Her research interests include distributed optimization, optimization theory and algorithms, learning-assisted optimization, and networked dynamical systems.
		\end{IEEEbiography}

		\end{document}